\documentclass[11pt]{article}
\usepackage{amssymb}
\usepackage{amsmath}
\usepackage{amsthm}
\usepackage[textwidth=16cm, textheight=24cm]{geometry}

\usepackage[english]{babel}
\usepackage[utf8]{inputenc}
\usepackage[T1]{fontenc}
\usepackage{enumitem}
\usepackage[all]{xy}
\usepackage{xcolor}
\usepackage{hyperref}
\hypersetup{
    colorlinks,
    linkcolor={red!80!black},
    citecolor={blue!80!black},
    urlcolor={blue!80!black}
}

\newtheorem{thm}{Theorem}
\newtheorem{corollary}[thm]{Corollary}

\newtheorem{lem}[thm]{Lemma}

\newtheorem{prop}[thm]{Proposition}
\newtheorem{defn}[thm]{Definition}
\newtheorem{remark}[thm]{Remark}
\newtheorem{example}[thm]{Example}
\newtheorem{problem}[thm]{Problem}

\newcommand{\Spec}{\operatorname{Spec}}
\newcommand{\Sym}{\operatorname{Sym}}
\newcommand{\Proj}{\operatorname{Proj}}
\newcommand{\tensor}{\otimes}
\newcommand{\PP}{{\mathbb{P}}}
\newcommand{\II}{{\mathcal{I}}}
\newcommand{\OO}{{\mathcal{O}}}
\newcommand{\kk}{\mathbf{k}}%
\newcommand{\mm}{\mathfrak{m}}%
\newcommand{\Aspace}{\mathbb{A}^{n}}%
\newcommand{\Aspacecomp}{\PP^{n}}%
\newcommand{\HilbdA}{\operatorname{Hilb}_d}%
\newcommand{\HilbdAsmooth}{\operatorname{Hilb}_d^{\circ}}%
\newcommand{\HilbdAzero}{\operatorname{Hilb}_d^{sm}}%
\newcommand{\Apolar}[1]{\operatorname{Apolar}(#1)}%
\newcommand{\FBS}[1]{\operatorname{Bs}_{\operatorname{aff}}(#1)}%
\newcommand{\Ann}[1]{\operatorname{Ann}\left( #1 \right)}%
\newcommand{\spann}[1]{\operatorname{span}\left( #1 \right)}%

\DeclareMathOperator{\Gr}{Gr}
\DeclareMathOperator{\Bl}{Bl}
\DeclareMathOperator{\Sec}{Sec}

\newcommand{\Gproj}{\Gr(3, \Sym^2 V)}%
\newcommand{\Gshort}{\Gr}%
\newcommand{\cT}{\mathcal{T}}%
\newcommand{\cQ}{\mathcal{Q}}%

\begin{document}

\title{Smoothable zero dimensional schemes and special projections of algebraic varieties}
\author{Joachim Jelisiejew, Grzegorz Kapustka, Micha{\l} Kapustka}
\maketitle

\begin{abstract} We study the degrees of generators of the ideal of a projected Veronese variety
$v_2(\PP^3)\subset \PP^9$ to $\PP^6$  depending on the center of projection.
This is related to the geometry of zero dimensional schemes of length
$8$ in $\mathbb{A}^4$, Cremona transforms of $\PP^6$, and the geometry of Tonoli  Calabi-Yau threefolds of degree $17$ in $\mathbb{P}^6$.
\end{abstract}
{\small\textbf{MSc classes:} primary: 14J32, 14E07, 14C05, secondary: 14J10.}

\section{Introduction}

The aim of the paper is to find and investigate a relation between the following three a priori distinct subjects:
\begin{itemize}
\item analysing smoothability of finite degree $8$ subschemes in $\mathbb{A}^4$.
\item describing  special projections of the double Veronese embedding of
    $\PP^3$ to $\PP^6$,
\item studying the action of special $(2,4)$ Cremona transformations
    $\PP^6 \dashrightarrow \PP^6$,
\end{itemize}
For a linear subspace $L \subset\Sym^2 V$ let us
denote by $\pi_L\colon \PP(\Sym^2 V)\dashrightarrow \PP(\Sym^2 V/L)$ the
projection. Let $v_2\colon \PP(V)\to \PP(\Sym^2 V)$ be the second Veronese
embedding. We prove the following theorem.
\begin{thm}\label{ref:mainintro:thm}
    Let $V = \mathbb{A}^4$ and $L \subset \Sym^2 V$ be a linear subspace of dimension
  $3$. Assume that the composition $\pi_L \circ v_2\colon \PP(V)
  \dashrightarrow \PP(\Sym^2 V/L)$ is regular and an embedding. Let $X_L$ be its image.  Then the following are equivalent
  \begin{enumerate}[label=(\alph*)]
          \item\label{it:maincremona} $\pi_L \circ v_2:\PP^3\to \PP^6$ is a
              restriction of
              a Cremona transformation $\PP^6 \dashrightarrow \PP^6$ of type
              $(2,4)$ based in a rational octic surface or a limit of such
              restrictions,
          \item\label{it:mainjump} $X_L$ is contained in a cubic hypersurface,
          \item\label{it:mainjumpthree} $X_L$ is contained in a three-dimensional
              space of cubic hypersurfaces,
          \item\label{it:mainsmoothable} the scheme $R = \Spec \Apolar{L}$ is smoothable,
          \item\label{it:maindescription} $L$ is spanned by partial derivatives of a cubic form $F \in
            \Sym^3 V$.
    \end{enumerate}
    The above equivalent conditions describe a closed, irreducible subset of
    $\Gr(3, \Sym^2 V)$. For a general $L$ in this subset the space of cubics
    containing $X_L$
    is exactly three-dimensional and it is spanned by Segre cubics.
    \end{thm}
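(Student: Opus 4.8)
The plan is to translate all five conditions into apolarity data attached to $L$ and then connect the translations. Writing $S=\Sym^\bullet V^*$ for the ring of differential operators acting on $\Sym^\bullet V$ by contraction, I first record the numerics: $\Apolar{L}=S/\Ann{L}$ is a local Artinian algebra of length $8$ with Hilbert function $(1,4,3)$ (the graded pieces being $\kk$, $V^*$, and $\Sym^2 V^*/L^\perp\cong L^*$), so $R=\Spec\Apolar{L}$ is a length-$8$ scheme supported at one point — the bridge to $\HilbdA$ with $d=8$, $n=4$. On the geometric side, a cubic on $\PP(\Sym^2 V/L)$ is an element of $\Sym^3 L^\perp$, and it contains $X_L$ exactly when its pullback under $v_2$ vanishes, i.e. when it lies in the kernel of the restriction map $\mu_L\colon \Sym^3 L^\perp\to \Sym^6 V^*$ (which, since $X_L\cong\PP^3$ is embedded by quadrics, is the restriction $H^0(\PP^6,\OO(3))\to H^0(\PP^3,\OO(6))$). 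As $\dim\Sym^3 L^\perp=\dim\Sym^6 V^*=84$, the map $\mu_L$ goes between spaces of equal dimension; I would check it is an isomorphism for generic $L$ (so the generic projected Veronese lies on no cubic), whence \ref{it:mainjump} reads $\operatorname{coker}\mu_L\neq 0$ and \ref{it:mainjumpthree} reads $\dim\ker\mu_L=3$.

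The core is the block \ref{it:mainjump}$\Leftrightarrow$\ref{it:mainjumpthree}$\Leftrightarrow$\ref{it:maindescription}. For \ref{it:maindescription}$\Rightarrow$\ref{it:mainjumpthree} I produce the cubics directly: if $L=\spann{\partial_0 F,\dots,\partial_3 F}$ with $F\in\Sym^3 V$, the cubic $3\times 3$ minors cutting out the secant variety $\Sec(v_2(\PP(V)))$ of rank-$\le 2$ quadrics descend under $\pi_L$ to cubics through $X_L$, and one checks these span a three-dimensional net — the Segre cubics of the statement. The reverse implication \ref{it:mainjump}$\Rightarrow$\ref{it:maindescription}, together with the jump from corank $\ge 1$ to corank exactly $3$, is the self-contained technical heart: from one cubic through $X_L$ I would read off, via the secant/minor structure, a distinguished quadric in $L$, and then integrate the net recorded by $\ker\mu_L$ to recover a cubic $F$ with $L=\spann{\partial F}$; a rigidity argument excluding coranks $1$ and $2$ shows the net is exactly three-dimensional. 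Realizing the locus as the image of the irreducible incidence variety $\{(F,L):L=\spann{\partial F}\}$ over the space of cubics then yields the closedness and irreducibility of the addendum, and the generic three-dimensionality and Segre-cubic description fall out of the generic fiber of $F\mapsto\spann{\partial F}$.

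Next I attack \ref{it:maindescription}$\Leftrightarrow$\ref{it:mainsmoothable} through $R$. The ambient input is that $\HilbdA$ is reducible for $d=8$, $n=4$, the smoothable component meeting the family of graded $(1,4,3)$-algebras (parametrized by $\Gr(3,\Sym^2 V)$) in a proper closed subset, so the generic such $R$ is \emph{not} smoothable. Because smoothability is a closed condition and the gradient locus is irreducible by the core block, it suffices to exhibit a single gradient net $L=\spann{\partial F}$ for which $R$ is a flat limit of reduced schemes, and then to match this closed locus with the smoothable one. The comparison is carried out using a classification of length-$8$ local algebras of Hilbert function $(1,4,3)$, identifying the smoothable ones with precisely the gradient nets; this gives \ref{it:maindescription}$\Leftrightarrow$\ref{it:mainsmoothable}.

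Finally \ref{it:maincremona} is linked to \ref{it:mainjumpthree} birationally. The point is that $\pi_L\circ v_2$ is nothing but the map $\PP(V)\to\PP^6$ given by the seven-dimensional linear system of quadrics $\Sym^2 V/L$, so asking it to be the restriction of a $(2,4)$ Cremona $\PP^6\dashrightarrow\PP^6$ (a quadro-quartic map, defined by quadrics) amounts to extending these seven quadrics to a Cremona of the whole $\PP^6$. The net of Segre cubics of \ref{it:mainjumpthree} is exactly the space of cubic relations among the quadrics that produces the quartic inverse and cuts out the rational octic base surface; I would construct the Cremona from the net and conversely recover the net from a given Cremona, the ``or a limit'' clause absorbing degenerations of the base surface and matching the passage to the closure in the core block. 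The two steps I expect to be genuinely hard are the external ones: proving \ref{it:maindescription}$\Leftrightarrow$\ref{it:mainsmoothable} requires precise control of the non-smoothable component of $\HilbdA$ and a clean classification of the $(1,4,3)$ local algebras, while proving \ref{it:maincremona} requires matching our net with the classification of special $(2,4)$ Cremona transformations and checking that the indeterminacy surface is exactly a rational octic through the degenerate cases. The self-contained crux remains the jump-to-exactly-three for $\mu_L$, since ruling out coranks $1$ and $2$ is what collapses the three ``cubic'' conditions and forces the distinguished net of Segre cubics.
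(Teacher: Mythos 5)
Your setup (the apolarity numerics, the $84\times 84$ restriction map $\mu_L\colon \Sym^3 L^{\perp}\to \Sym^6 V^{\vee}$, jump $=$ corank) agrees with the paper's, but the step you yourself call the crux --- excluding coranks $1$ and $2$, i.e.\ proving \ref{it:mainjump} $\Rightarrow$ \ref{it:mainjumpthree} --- has no actual argument behind it: ``read off a distinguished quadric\ldots integrate the net\ldots a rigidity argument'' is a placeholder, and no pointwise rigidity of this kind is available. The paper's mechanism is global and of a completely different nature: the jump locus is the degeneracy divisor of the bundle map \eqref{eq:degenerationGlobal} on $\Gr(3,\Sym^2 V)$, of degree $36$ by a Schubert computation; the locus of $\PP(L)$ meeting $\Sec(v_2(\PP^3))$ is a divisor of degree $10$ on which the jump is $\geq 3$ (Lemma~\ref{ref:secant:lem}), and the smoothable divisor has degree $2$ by the Pfaffian equation \eqref{eq:smoothableDivisorEquation} and also carries jump $\geq 3$ (Theorem~\ref{ref:schreyerfestdinner:thm}); since $3\cdot(10+2)=36$, the jump locus must coincide with the union of these two divisors, and this degree bookkeeping is precisely what collapses \ref{it:mainjump} into \ref{it:mainjumpthree}. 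Note that the count is carried out on the whole Grassmannian, secant-meeting planes included, whereas your clean equivalence ``corank $\neq 0\Leftrightarrow$ gradient net'' is false on that component; any substitute argument must account for it. Your source of the three cubics is also unproven: that the $3\times 3$ catalecticant minors descend along $\pi_L$ for a gradient net is asserted (``one checks''), not checked, whereas the paper produces the cubics by extending $\pi_L\circ v_2$ to $\PP^4\dashrightarrow\PP^6$ with base a general $8$-tuple $\Gamma$ (Corollary~\ref{ref:extendingrationas:cor}), taking the Gale-dual pencil of elliptic normal curves through $\Gamma$, and extracting from each curve $E$ a $(2,3)$ Cremona of $\PP^4$ whose cubic inverse yields a Segre cubic through $X_L$; three independent ones come from showing the induced map $\PP^1\to\PP\bigl(\Sym^3 L^{\perp}\bigr)$ is neither constant nor a line.

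There is moreover a plain logical error in your treatment of \ref{it:maindescription} $\Rightarrow$ \ref{it:mainsmoothable}: since smoothability is a \emph{closed} condition, exhibiting a single smoothable gradient net plus irreducibility of the gradient locus proves nothing about the remaining gradient nets --- you would need the \emph{general} one to be smoothable, and one point is not dense. The correct short argument, cited in the paper from~\cite{CEVV}, is that $L\subset\spann{\partial F}$ forces $R\subset\Spec\Apolar{F}$, a Gorenstein scheme with Hilbert function $(1,4,4,1)$, and such schemes are smoothable. For the converse the paper does not classify $(1,4,3)$-algebras either: it reduces to $\kk^*$-limits (Corollary~\ref{ref:smoothabilityofgraded:cor}) and uses Eisenbud--Popescu to enlarge the underlying $8$ points of $\PP^4$ to $10$ arithmetically Gorenstein points, whose cone's hyperplane section is a Gorenstein scheme $S_{\varepsilon}=\Apolar{F_{\varepsilon}}$ with Hilbert function $(1,4,4,1)$ containing a perturbation of $R$ --- this is Proposition~\ref{ref:cubicvssmoothable:prop}. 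Finally, your link to \ref{it:maincremona} reverses the actual construction without justification: nothing in your outline builds a $(2,4)$ Cremona of $\PP^6$ out of the net of cubics (the net consists of relations among the \emph{seven} quadrics on $\PP^3$, not of data on $\PP^6$). The paper instead constructs the octic surface $S_8$ from the $8$ points via Gale duality and cubic-scroll geometry (Proposition~\ref{ref:embeddingInOctic:prop}), so that $\pi_L\circ v_2$ factors through $c_{S_8}$, and obtains \ref{it:maincremona} $\Rightarrow$ \ref{it:mainsmoothable} by observing that a restriction of such a Cremona exhibits $\Spec\Apolar{L}$ as a hyperplane section of a cone over $8$ points of $S_8$, hence smoothable.
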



Our original motivation for studying these projections is related to
understanding the geometry of constructions of Calabi--Yau threefolds in
$\PP^6$, some of which are related to special projections of the
Veronese embedding of $\PP^3$. More precisely,  in \cite{KK, KK2}, the
second and third author study the projections
$$\pi_{L}(v_2(\PP^3))=X_{L}^8\subset \PP^6$$ from
$\PP(L)$ for $L$ of dimension 3. For a generic $L$ one has $H^0(\II_{X^8_{L}}(3))=0$. 
However, using \emph{Macaulay2} with a lot of random choices it was proven that one can find $L$ 
 such that $X^8_{L}$ is smooth and $H^0(\II_{X^8_{L}}(3))=3$. 
 For such special projections,  by the bilinkage construction one can construct degenerated Tonoli Calabi-Yau threefolds of degree $17$ in $\PP^6$ (cf.~\cite{KK}).
 In the
 present paper we explain the geometric meaning of these special exceptional centers.
Our problem is related to the following more general subject.
Let $X \subset \PP(W)\simeq \PP^r$ be an algebraic variety,
 consider the projection $\pi_{L}\colon X\to X_{L} \subset \PP^{r-t}$ from the center $$\PP(L)=\PP^{t-1}\subset \PP(W)$$ such that $\pi_{L}$ is an isomorphism. 
 \begin{problem} What are the possible Betti numbers of the ideal of
     $X_{L}\subset \PP^{r-t}$ when we move the center of projection $L\in \Gr(t,W)$?
\end{problem}

The study of the geometry of central projections is a classical topic that was widely studied for generic projections \cite{R}, \cite{GrusonPeskine}, \cite{BEisenbud}.
The study of Betti numbers of projected varieties was discussed in \cite{AK}, \cite{HK}, \cite{AR}.
Using the mapping cone construction the authors were able under some
conditions to relate the Betti numbers of the variety before and after the
projection. In particular in \cite[Prop.~4.11]{AK} it is described how the
number of quadrics in the ideal of a centrally projected variety changes when
we move the center of projection.

The case of projections with center being a point is also considered in \cite{AR}.
The authors describe sub-schemes $Z_k(X)\subset \PP^r$ being the loci of points such that the ideal of the projected variety admits more generators of a given degree. 
The aim of this paper is to study the case when the center of projection has dimension $\geq 1$.
The first case to consider are the projections of the second Veronese embeddings $v_2(\PP^2)\subset \PP^5$ and its projections to smooth surfaces in $\PP^4$. However, it is easy to see that all such projections give projectively isomorphic images. The cases of projection of $v_2(\PP^3)$ to $\PP^8$ and $\PP^7$ are treated in \cite{AR}. Our situation is hence the next case to check.

The problem is unexpectedly related  to the theory of zero-dimensional
schemes, which
seems a purely algebraic one. For such a scheme $R\subset \Aspace$ we say
that $R$ is \emph{smoothable} if it is a limit of smooth subschemes (tuples of
points). More precisely, let $\HilbdA$ be the Hilbert scheme of $d$ points on
$\Aspace$, so that (closed) points of $\HilbdA$ are finite degree $d$
subschemes of $\Aspace$. Let $\HilbdAsmooth \subset \HilbdA$ be an open subset
consisting of tuples of $d$ distinct points on $\Aspace$. Let
$\HilbdAzero$ be the closure of $\HilbdAsmooth$. Then $R \subset \Aspace$ is
smoothable if and only if the corresponding point lies in $\HilbdAzero$.
Whether a given $R$ is smoothable is a difficult question, see~\cite{CEVV,
erman_velasco_syzygetic_smoothability, cn09, cn10, cjn13, DJNT,
Sivic__Varieties_of_commuting_matrices,
bertone_cioffi_roggero_marked_bases,
bertone_cioffi_roggero_smoothable_Gorenstein, dosSantos}. It is connected
with the search for equations of secant varieties~\cite{bubu2010,
jabu_ginensky_landsberg_Eisenbuds_conjecture}. An important aspect of
Theorem~\ref{ref:mainintro:thm} is that it gives a geometrical interpretation
of smoothability, in a special case.


Our special projections are also related to the theory of Cremona
transformations, i.e., birational self maps of the projective space. Such self
maps are induced by systems of homogeneous polynomials of equal degree called
the \emph{degree} of the Cremona transformation. These polynomials define the
indeterminacy of the map called the \emph{base} of the Cremona transformation. One also defines the \emph{type} of a Cremona transformation as a pair consisting of its degree and the degree of its inverse. 
The problem of classification of Cremona transformations with smooth base loci was considered in
 \cite{Crauder_Katz__Cremonas_with_smooth_fundamental_locus, PirioRusso,
hulek_katz_schreyer__Cremonas_and_syzygies, Stagliano}. In particular,
Cremona transformations with base loci being smooth surfaces was classified
in \cite{Crauder_Katz__Cremonas_with_smooth_fundamental_locus}. In this work we investigate and exploit the geometry of one of the five types
of transformations with base loci being smooth surfaces:  Cremona transformations of type
$(2,4)$ in $\PP^6$ based in a rational surface embedded as a surface of degree
$8$ and sectional genus $3$  in $\PP^6$.  

 

The paper is centered around the proof of Theorem~\ref{ref:mainintro:thm}. In
Section~\ref{sec:prelims} we discuss general results on base loci and
zero-dimensional schemes. Section~\ref{sec:projs} applies them to the special
case of quadric embeddings $\PP^3\to \PP^6$. In
Subsection~\ref{ssec:HilbertEight} we discuss deformations of
zero-dimensional, degree $8$ subschemes of $\mathbb{A}^4$ and give a geometric
proof of the equivalence of Condition~\ref{it:maindescription}
and Condition~\ref{it:mainsmoothable} from Theorem~\ref{ref:mainintro:thm} (the
equivalence was first proven in~\cite{CEVV}). In
Subsection~\ref{ssec:schreyerfest} we prove
equivalence of Conditions~\ref{it:mainsmoothable}, \ref{it:mainjump}
and~\ref{it:mainjumpthree}.
Finally, in Subsection~\ref{ssec:Cremonas} we prove equivalence
of Conditions~\ref{it:maincremona} and~\ref{it:mainsmoothable} and provide a
formal proof of Theorem~\ref{ref:mainintro:thm}.


\section{Preliminaries}\label{sec:prelims}
We work over an algebraically closed base field $\kk$ of characteristic $\neq2, 3$.
For a vector space $V$, by $\PP(V)$ we mean the scheme $\Proj \Sym(V^\vee)$.
Then the cone over $\PP(V)$ is identified with $V$.
When speaking about a rational map (or a morphism) ${\varphi:\PP(V)
\dashrightarrow \PP(W)}$
we always implicitly fix a morphism $\hat{\varphi}:V\to
W$ inducing $\varphi$.
\begin{defn}\label{ref:jumplocus:def}
        We define the \emph{jump locus} inside the Grassmannian
        $J^l_k\subset \Gr(t,r+1)$ by the following \emph{jump condition}:
         a point $L \in \Gr(t,r+1)$ is in the set $J_k^l$ if
           $H^0(\II_{X_L}(k))$
        has dimension higher then the generic value and the difference is $l$.
    \end{defn}
  By the semicontinuity theorem we deduce that $l\geq 0$.

It follows from \cite[Prop.~4.1]{AK} that the isomorphic projection of a $m$-normal variety from a center
of dimension $t-1$ is still $m$-normal for $m\geq t+1$. In the case where $ X\subset \PP^r$ is projectively normal it follows from \cite[Prop.~2.1]{AR} that the number of hypersurfaces of degree $k\geq t+1$ in the ideal of the projected variety is uniquely determined. So we are interested in the sets $J^l_k$ for $k\leq t$.  

On the other hand from \cite[Prop. 4.11]{AK}, the number of generators of degree $2$ of the centrally projected variety (from a point) is uniquely determined by the dimension of the secant locus of the projection in the case $X\subset \PP^r$. 

Natural problems occur:
\begin{itemize}
\item Are the loci $J_k^l$ related to the secant loci of $X\subset \PP^r$,
\item For a given $X\subset \PP^r$ and $2<k\leq t$ what are the possible values of $l$ such that $J^l_k$ is non-empty.
\item Are there jumps i.e $0<l<p<m$ such that $J^l_k\neq \emptyset$ and $J^m_k\neq \emptyset$ but
$J^p_k= \emptyset$ for some $k$.

\end{itemize}
In this paper we address all those questions in our example.
 
Note that we cannot describe $J^l_k$ directly by induction using projections from points studied in \cite{AR} since the schemes $Z_k(X_p)$ vary when we move the center $p\in \PP^r-\Sec(X)$.

\subsection{Affine base loci}

One of the main objects in our study of rational maps are the affine base loci,
that we define below. Recall that in our convention a rational map
$\varphi\colon \PP(V)\dashrightarrow \PP(W)$ comes with a fixed map
$\hat{\varphi}\colon V\to W$ on the level of cones.
\begin{defn}
    Let $\varphi:\PP(V)\dashrightarrow \PP(W)$ be a rational map.
    The \emph{affine base locus} is the
    affine scheme $\hat{\varphi}^{-1}(0) \subset V$. We denote it by
    $\FBS{\varphi}$.
\end{defn}
For every $\varphi$, the affine base locus is invariant under homothety and its image
in $\PP(V)$ is the base locus for $\varphi$. If $\varphi$ is
regular, then $\FBS{\varphi}$ is supported at $0 \in V$, hence it is
zero-dimensional. Note that $\FBS{\varphi}$ may be non-empty even for
a regular $\varphi$.

Recall that each rational map between projective spaces is given by a $d$-th
Veronese embedding composed with a
linear projection and that such map is regular (respectively, isomorphism onto the image) if
and only if the center of the linear projection does not intersect the image of
$\PP(V)$ in $\PP(\Sym^d V)$ (respectively, the secant variety of the
image).

\begin{example}
    Let $\PP^3\to \PP^6$ be a morphism given by seven quadrics. The algebra $A
    = H^0(\OO_{\FBS{\varphi}}) = \kk[x_0, x_1, x_2, x_3]/(q_1, \ldots ,q_7)$ is
    zero-dimensional and graded. For a general enough choice of quadrics we
    have $(x_0, x_1, x_2, x_3)(q_1, \ldots ,q_7) = (x_0, x_1, x_2, x_3)^3$,
    hence $A$ is spanned by unity, linear forms and three complementary
    quadrics, so it has degree $1 + 4 + 3$.
\end{example}

We now aim at describing the geometry behind the affine base locus of a
\emph{morphism} of projective spaces. This sends us to the world of finite
schemes and Hilbert schemes of points.

\subsection{Apolarity}\label{ssec:apolarity}

Recall that we have assumed that $\kk$ has characteristic not equal to two or
three.
For a characteristic-free description of apolarity and
further information see e.g.~\cite{iakanev, Jel_classifying}.

We recall a very useful parameterization tool for finite schemes, called
apolarity or Macaulay inverse systems. Namely, $\Sym V^\vee$ acts on $\Sym V$,
where elements of $V^\vee$ act as partial derivatives.
For a finite dimensional subspace $L \subset \Sym V$ we may consider the ideal
$\Ann{L}$ of all operators from $\Sym V^\vee$ annihilating $L$ and the
quotient
\[
    \Apolar{L} = \Sym V^\vee/\Ann{L}
\]
which is a local zero-dimensional $\kk$-algebra with residue field $\kk$ and
of rank equal to $\dim_{\kk} (\Sym V^{\vee}\circ L)$.
We will be mostly interested in the case when $\dim V = 4$ and $L$ is a
three-dimensional space of quadrics.

    \begin{example}
        Let $V = \spann{x, y, z, t}$ and $L = \spann{x^2, y^2,
        z^2 - t^2}$. Let  $V^\vee = \spann{\partial_x, \partial_y, \partial_z,
        \partial_t}$ be the dual basis.
        Then
        \[\Ann{L} = \spann{\partial_x\partial_y,\ \partial_x\partial_z,\
            \partial_x\partial_t,\ \partial_y\partial_z,\ \partial_y\partial_t,\
        \partial_z\partial_t,\ \partial_z^2 + \partial_t^2} + \Sym^{\geq 3} V^\vee.\]
        Consequently, $\Apolar{L}  \simeq  \spann{\partial_x^2,\ \partial_y^2,\
        \partial_z^2 - \partial_t^2,\ \partial_x,\ \partial_y,\ \partial_z,\
        \partial_t, 1}$ as linear spaces.
    \end{example}

A theorem of Macaulay asserts that $L \mapsto \Apolar{L}$ induces a bijection between
zero-dimensional subschemes of $V$ supported at the origin and finite
dimensional subspaces $L' \subset \Sym V$ which are closed under the action of
$\Sym V^{\vee}$, i.e., which are $(\Sym V^{\vee})$-submodules. Clearly subspaces spanned by homogeneous
elements give $\kk^*$-invariant schemes and conversely. Moreover, principal $(\Sym V^{\vee})$-submodules
correspond precisely to Gorenstein schemes. For example, a general cubic
gives a graded Gorenstein subscheme with Hilbert function $(1, n, n, 1)$, where $n =
\dim V$. Note that $\Apolar{-}$ is order preserving: a subscheme corresponds to a
smaller linear space.

\subsection{Geometry of zero-dimensional schemes}

For a zero-dimensional scheme, its \emph{length} is the linear dimension of
its algebra of global sections. If the scheme is embedded into a projective
space, then it is the same as its degree.  In this subsection we fix the
length $d$ of considered schemes. If the scheme is irreducible, then it
corresponds to a \emph{local} algebra $(A, \mm, \kk)$. In this case by the \emph{Hilbert
function} we denote $H(i) = \dim_{\kk} \mm^i/\mm^{i+1}$. This function is
usually written as a vector of its non-zero values, e.g.~for a first
infinitesimal neighbourhood of a point in $\mathbb{A}^3$ one gets $(1, 3)$. For
a general reference on finite schemes,
see~\cite{jabu_jelisiejew_smoothability}.

For a general reference on the Hilbert schemes of points,
see \cite{fantechi_et_al_fundamental_ag, haiman__multigraded,
hartshorne_deformation_theory}.

The functor of embedded flat families of length $d$ zero-dimensional subschemes of
$\Aspace$ is represented by the \emph{Hilbert scheme of points} of
$\Aspace$, which we denote $\HilbdA \Aspace$ or shortly $\HilbdA$. It is a connected,
quasi-projective scheme; in fact it is an open subset of the Hilbert scheme of
points on $n$-dimensional projective space (working with an affine instead
of a projective space is natural, since we look towards analysing affine base
loci). Closed points of $\HilbdA$ correspond to zero-dimensional
schemes; we will denote by $[R]$ the
point corresponding to a subscheme $R \subset \Aspace$.

The most natural zero-dimensional subscheme of $\Aspace$ of length
$d$ is just a $d$-tuple of points. Denote by $\HilbdAsmooth \subset \HilbdA$
the subset corresponding to all tuples of points. A scheme is a tuple of points
precisely when it is smooth, hence $\HilbdAsmooth$ is open in $\HilbdA$.
Its closure is then an irreducible component of $\HilbdA$, denoted by
$\HilbdAzero$. The points of $\HilbdAzero$ are limits of smooth schemes, hence are called
\emph{smoothable} schemes and $\HilbdAzero$ is called the \emph{smoothable
component}. This component has dimension $nd$.

Smoothability has a down-to-earth characterisation, at least in the graded
case and generically. For a scheme $R \subset \Aspace$ we say that it is a
\emph{$\kk^*$-limit} if $R = \lim_{t\to 0} t\Gamma$ for a tuple $\Gamma$ of $d$
points of $\Aspace$. Then $R$ is $\kk^*$-invariant and smoothable.
The notion of $\kk^*$-limits may be formulated differently as follows.
Fix a $\kk^*$-limit $R = \lim_{t\to 0} t\Gamma$.
Compactify $\Aspace$ to a $\Aspacecomp$ by adding a ``time coordinate''. Then
$\Gamma$ becomes as a set of $d$ points of $\Aspacecomp$ and $R$ is the
hyperplane section of the cone over $\Gamma$ by the hyperplane corresponding
to $t = 0$.
We say that an irreducible scheme $R \subset \Aspace$ is
\emph{compressed} if there is an $s$ such that the Hilbert function of $R$
satisfies $H(i) = \dim \Sym^i \kk^{n}$ for all $i < s$ and $H(i) = 0$ for
all $i > s$. For example if $R \subset \mathbb{A}^3$ is an irreducible scheme of
length $12$, then it is compressed if and only if its Hilbert function is $(1,
3, 6, 2)$.

\begin{corollary}\label{ref:smoothabilityofgraded:cor}
    The set of smoothable compressed subschemes of $\Aspace$ is irreducible
    and its general member is a $\kk^*$-limit.
\end{corollary}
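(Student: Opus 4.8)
The plan is to reduce the statement to schemes supported at the origin, where the compressed condition forces homogeneity, and then to realize the smoothable compressed schemes as the image of the attracting cell of the scaling $\kk^*$-action on the irreducible component $\HilbdAzero$.

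First I would prove that a compressed scheme becomes homogeneous once its support is translated to the origin. Write $s$ for the socle degree; the compressed condition is exactly $H(i)=\dim\Sym^i\kk^n$ for $i<s$ and $H(i)=0$ for $i>s$. Translating the support to $0$ and passing to the ideal $I\subset\Sym V^\vee$, the vanishing in high degrees gives $\mm^{s+1}\subset I$, while the maximality of $H$ in low degrees forces every element of $I$ to have order at least $s$, i.e.\ $I\subset\mm^s$. From $\mm^{s+1}\subset I\subset\mm^s$ one reads off $I=(I\cap\Sym^s V^\vee)+\mm^{s+1}$, so $I$ is homogeneous. Consequently the locus of compressed schemes is, as a variety, $V\times G^\circ$, where $G^\circ\subset\Gr(e,\Sym^s V)$ (with $e=H(s)$) is the open subset of top-degree inverse systems $L_s$ whose derivatives fill $\Sym^{<s}V$, the identification recording the support point together with the apolar model of Section~\ref{ssec:apolarity}. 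This reduces the corollary to showing that the smoothable locus $\mathcal C_0\subset G^\circ$ is irreducible with general member a $\kk^*$-limit; the translation factor $V$ is harmless.

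Next I would exploit the scaling $\kk^*$-action on $\HilbdAzero$, which is irreducible of dimension $nd$ and whose fixed points are exactly the origin-supported homogeneous schemes. The key computation is that a general $d$-tuple $\Gamma$ imposes independent conditions in each degree, so the associated graded ideal $\operatorname{in}(I_\Gamma)=\lim_{t\to0}t\Gamma$ has Hilbert function $\min\!\big(\dim\Sym^i V,\ d-\sum_{j<i}\dim\Sym^j V\big)$, which is precisely the compressed one. Hence the open, nonempty (therefore dense and irreducible) sublocus $\mathcal U_0\subset\HilbdAsmooth$ of tuples with compressed radial limit maps, via $Z\mapsto\lim_{t\to0}tZ$, into $\mathcal C_0$; on $\mathcal U_0$ the limit Hilbert function is constant, so this assignment is a morphism by Bialynicki--Birula theory and its image is an irreducible constructible set consisting of $\kk^*$-limits. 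Granting that this image is dense in $\mathcal C_0$, we conclude that $\mathcal C_0$ is irreducible and that its general member is $\lim_{t\to0}t\Gamma$ for a general $\Gamma$, i.e.\ a $\kk^*$-limit; undoing the translation reduction then yields the full statement.

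The main obstacle is precisely the density claim: one must show that the image of $\mathcal U_0$ is all of $\mathcal C_0$, i.e.\ that every smoothable homogeneous compressed scheme lies in the closure of the compressed $\kk^*$-limits and is not an extraneous component. For this I would take a smoothing $\Gamma_t\to R_0$ of a given $R_0\in\mathcal C_0$ and analyze the two-parameter family $s\cdot\Gamma_t$, using properness of the Hilbert scheme together with upper semicontinuity of the Hilbert function to force the iterated radial limit back onto the $\kk^*$-fixed scheme $R_0$. The delicate point is that the Bialynicki--Birula limit map is in general discontinuous across strata, so the path $\Gamma_t$ may cross strata as $t\to0$; controlling this limit exchange—most safely through the functor $\operatorname{Hom}^{\kk^*}(\mathbb{A}^1,\HilbdAzero)$ and its evaluation morphisms at $0$ and $1$, which remain morphisms even though $\HilbdAzero$ is singular for $n\ge 3$—is where the real work lies.
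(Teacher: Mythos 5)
You have not proved the statement: your argument is explicitly conditional on the density claim that the image of $\mathcal U_0$ under $\Gamma\mapsto\lim_{t\to 0}t\Gamma$ is dense in the locus $\mathcal C_0$ of smoothable compressed origin-supported schemes, and that claim is the entire content of the corollary. What you have actually constructed is one irreducible constructible family of $\kk^*$-limits inside $\mathcal C_0$; a priori $\mathcal C_0$ (a closed subset of the irreducible $G^\circ$, but closed subsets of irreducible varieties need not be irreducible) could have further components meeting no $\kk^*$-limits at all, in which case neither assertion follows. Your final paragraph correctly locates the difficulty but does not resolve it, so this is a genuine gap. For calibration: the paper does not argue this either --- its entire proof is the line ``This follows from \cite[Lemma 5.4]{CEVV}'' --- so the question is whether your sketch closes the gap that the citation hides; as written it does not. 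A secondary imprecision: $\lim_{t\to 0}t\Gamma$ is not the $\mm$-adic associated graded ideal $\operatorname{in}(I_\Gamma)$ (for $\Gamma$ off the origin that would be the unit ideal); it is the hyperplane section at infinity of the cone over $\Gamma\subset\Aspacecomp$, i.e.\ the ideal of \emph{top}-degree forms, exactly as the paper sets it up before the corollary. Your Hilbert-function formula for the limit is nevertheless the correct one.

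The missing step does close, and more cheaply than your appeal to $\operatorname{Hom}^{\kk^*}(\mathbb A^1,-)$ suggests, precisely because of compressedness. Let $\Gamma_b\to R_0$ be a smoothing over a smooth curve germ, with $R_0$ compressed of socle degree $s$. For each $k$ the function $b\mapsto\dim\bigl(I_{\Gamma_b}\cap\Sym^{\le k}V^\vee\bigr)$ is upper semicontinuous, is bounded below by $\max\bigl(0,\dim\Sym^{\le k}V^\vee-d\bigr)$ (it is the kernel of evaluation on a length-$d$ scheme), and attains this lower bound at $b=0$ because $R_0$ is compressed; hence it is constant near $b=0$. Consequently the nearby fibres $\Gamma_b$ of an \emph{arbitrary} smoothing automatically lie in your $\mathcal U_0$, the kernels $I_{\Gamma_b}\cap\Sym^{\le k}V^\vee$ form subbundles, all the radial limits $\lim_{t\to0}t\Gamma_b$ have the compressed Hilbert function, and $b\mapsto\lim_{t\to0}t\Gamma_b$ is a flat family near $b=0$ whose fibre at $b=0$ is $R_0$ itself (as $R_0$ is homogeneous). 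Thus $R_0$ lies in the closure of the image of $\mathcal U_0$, and there is no stratum-crossing to control: compressedness of the special fibre pins the whole punctured family to a single attracting stratum. Writing this out --- or simply citing \cite[Lemma 5.4]{CEVV} as the paper does --- would complete your proof.
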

\begin{proof}
    This follows from~\cite[Lemma 5.4]{CEVV}.
\end{proof}

\begin{corollary}\label{ref:extendingrationas:cor}
    Let $\varphi:\PP(V)\to \PP(W)$ be a morphism and $R \subset V$ be its affine base
    locus. If $R$ is a $\kk^*$-limit then there exists an
    inclusion $V \subset V'$ with one dimensional cokernel and an extension
    $\psi:\PP(V')\dashrightarrow \PP(W)$. Conversely, if $\psi$ exists and
    its base locus $\Gamma$ is non-empty and smooth of degree $\deg R$, then $R$ is $\kk^*$-limit.
\end{corollary}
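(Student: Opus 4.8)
The plan is to use the explicit description of a $\kk^*$-limit as a hyperplane section of a cone given just before the statement, and to translate freely between generators of the affine base locus and homogeneous forms defining the extension. Throughout I would fix coordinates: write $V = \mathbb{A}^n$ with coordinates $x_1, \dots, x_n$ and realise the one-dimensional extension as $V' = V \oplus \kk e = \mathbb{A}^{n+1}$, where the new coordinate $x_0$ is dual to $e$, so that $V = \{x_0 = 0\}$ and $\PP(V) = \{x_0=0\} \subset \PP(V')$. The morphism $\varphi$ is given by forms $f_0, \dots, f_m \in \Sym^d V^\vee = \kk[x_1, \dots, x_n]_d$, and by definition of the affine base locus $\OO_R = \kk[x_1, \dots, x_n]/(f_0, \dots, f_m)$.

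For the forward implication I would write $R = \lim_{t\to 0} t\Gamma$ for a tuple $\Gamma = \{p_1, \dots, p_d\} \subset V$ of distinct points. Following the discussion preceding the statement, I lift each $p_i$ to $\tilde p_i = (1, p_i) \in V'$ and set $C = \bigcup_i \kk\,\tilde p_i \subset V'$, the cone over $\Gamma$; then $R = C \cap \{x_0 = 0\}$ scheme-theoretically, so the homogeneous ideal $I(C)$ reduces modulo $x_0$ to $(f_0, \dots, f_m)$. Since reduction modulo $x_0$ is a graded homomorphism, each $f_j$ lifts to a homogeneous $F_j \in I(C)_d$ with $F_j|_{x_0 = 0} = f_j$. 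The resulting $\hat\psi = (F_0, \dots, F_m)\colon V' \to W$ is homogeneous of degree $d$ and restricts to $\hat\varphi$ on $V$, so the induced $\psi\colon \PP(V') \dashrightarrow \PP(W)$ extends $\varphi$ (and its base locus contains $\Gamma$, as the $F_j$ vanish on $C$).

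For the converse I would fix the forms $F_0, \dots, F_m \in \kk[x_0, \dots, x_n]_d$ defining $\psi$, normalised so that $F_j|_{x_0 = 0} = f_j$. Because $\varphi$ is a morphism, its projective base locus is empty, so $\Gamma \cap \PP(V) = V(f_0,\dots,f_m) = \emptyset$; hence the base locus $\Gamma = V(F_0, \dots, F_m)$ of $\psi$ lies in the chart $\{x_0 \neq 0\} \cong \mathbb{A}^n$, which I identify with $V$, and by hypothesis it is a reduced set of $d = \deg R$ points. Letting $C \subset V'$ be the affine cone over $\Gamma$, homogeneity of the $F_j$ forces $F_j \in I(C)$, so $f_j = F_j|_{x_0=0}$ lies in the ideal of $R' := C \cap \{x_0 = 0\}$; this gives a surjection $\OO_R \twoheadrightarrow \OO_{R'}$, i.e.\ $R' \subseteq R$ as subschemes of $V$. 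On the other hand the projection $C \to \mathbb{A}^1_{x_0}$ is finite flat of degree $d$ (its $d$ lines each map isomorphically), so $R' = C \cap \{x_0 = 0\}$ has length $d$ and, by the description preceding the statement, is exactly the $\kk^*$-limit $\lim_{t\to 0} t\Gamma$.

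The step that requires the most care — and where the hypotheses enter essentially — is the final comparison. A priori the lifting argument only produces the containment $R' \subseteq R$; it is precisely the equality $\deg \Gamma = \deg R$ together with the smoothness of $\Gamma$ (which guarantees $\operatorname{length} R' = \deg \Gamma = d$) that upgrades $\OO_R \twoheadrightarrow \OO_{R'}$ to an isomorphism, giving $R = R'$ and hence exhibiting $R$ as a $\kk^*$-limit. Matching scheme structures rather than mere supports, both when reducing $I(C)$ modulo $x_0$ in the forward direction and when recovering $R$ in the converse, is the main obstacle throughout.
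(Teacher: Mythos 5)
Your proof is correct and follows essentially the same route as the paper's: lifting the degree-$d$ equations of $R$ to homogeneous equations of the cone over $\Gamma$ for the forward direction, and for the converse producing $R' = C \cap V \subseteq R$ from the (saturated/reduced) cone and using $\operatorname{length} R' = \deg \Gamma = \deg R$ to force $R' = R$. Your flatness argument for $\operatorname{length} R' = d$ is just a spelled-out version of the paper's observation that a proper hyperplane section of the cone has degree $\deg \Gamma$, and your handling of non-saturation by working with $I(C)$ directly matches the paper's passage from $\hat{\Gamma}$ to $\hat{\Gamma}'$.
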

\begin{proof}
    If $R \subset V$ is a $\kk^*$-limit of $\Gamma$ then one may compactify
    $V$ to $\PP(V')$ by adding a ``time coordinate'' as above. Then $R$ is a
    hyperplane section of the cone over $\Gamma$ and the equations of
    $\varphi$ defining $\Gamma$ lift to equations of $\Gamma$, which induce a
    rational map $\psi:\PP(V')\dashrightarrow \PP(W)$ with base locus $\Gamma$.
    Conversely, if $\psi$ exists and its base locus $\Gamma$ is smooth non empty, then it is
    also zero-dimensional. Let $\hat{\Gamma}$ be the affine base locus of
    $\psi$. Then $\hat{\Gamma}$ is cone over $\Gamma$, perhaps with some embedded component
    at the origin (due to lack of saturation). Let $\hat{\Gamma}' \subset \hat{\Gamma}$ be the cone given
    by saturation $I(\Gamma)$. The scheme $R' = \hat{\Gamma}' \cap V$ is
    a hyperplane section of $\hat{\Gamma}'$, hence has degree $\deg \Gamma$, which is equal to $\deg
    R$ by assumption. The scheme $R'$ is a $\kk^*$-limit
    of the affine scheme $\Gamma$ in $\PP(V')\setminus\PP(V)$. By definition, $R = V\cap
    \hat{\Gamma}$, thus $R' \subset R$ are two zero-dimensional schemes of
    the same degree, so $R' = R$.
\end{proof}

For $n\leq 2$ the smoothable component is the unique component and in fact
$\HilbdA$ is smooth~\cite{fogarty}. This is no longer the case for $n \geq 3$.
If the dimension is at least three and $d$ is large enough,
then the Hilbert scheme is reducible and singular (\cite{CEVV,
erman_Murphys_law_for_punctual_Hilb, iaCompressed} and also
\cite{emsalem_iarrobino_small_tangent_space}\footnote{Note: there is a known numerical
    mistake in the computation on page 169, compare~\cite{cn09}.}). Not much is known about the
additional components of the Hilbert scheme, in fact their mere presence seems
to discourage investigators.

It is known that for $d\leq 7$, any $n$ or for $d = 8$, $n\leq 3$ the Hilbert
scheme is irreducible~\cite{CEVV}.
This scheme is reducible for $d = 8$ and $n \geq 4$.
In this paper we are interested in the ``smallest'' reducible example: the
Hilbert scheme of $d = 8$ points on $\mathbb{A}^4$.

\section{Projections of $v_2(\PP^3)$}\label{sec:projs}
In this section we study the geometry of projections of $v_2(\PP^3)\subset \PP^9$ to $\PP^6$.
Let us introduce some useful tools.
\subsection{Hilbert scheme of eight points on affine
four-space}\label{ssec:HilbertEight}

    \newcommand{\Hilbonefourthree}{\operatorname{Hilb}_{143}}%

    The paper~by Cartwright, Erman, Velasco and Viray~\cite{CEVV} is devoted
    to the  analysis of the Hilbert scheme of eight points on affine $4$-space.
    Most of the facts below are found there; our contributions are
    Proposition~\ref{ref:cubicvssmoothable:prop} and
    Remark~\ref{ref:mojezeby:rmk}.

    The Hilbert scheme of $8$ points on $V := \mathbb{A}^4$ has two irreducible
    components. The smoothable component has dimension $8\cdot 4 = 32$.
    The other component, $\Hilbonefourthree$, has dimension $25$ and is
    isomorphic to $V \times \Gr(7, \Sym^2 V^\vee)$. The isomorphism is given
    by sending $(p, N)$ to the irreducible scheme $\Sym V^\vee/(N) + (V^\vee)^3$
    translated so that its support is $p$. Hence the schemes
    corresponding to points of $\Hilbonefourthree$ are irreducible and have
    Hilbert function $(1, 4, 3)$. By apolarity we may equivalently
    parameterise this component as $V \times \Gr(3, \Sym^2 V)$, by sending $(p,
    L)$ to $\Apolar{L + V}$ supported at $p$. Note that if $\Apolar{L+V}\neq
    \Apolar{L}$, then the partials of forms of $L$ span a proper subspace
    $W$ of $V$,
    hence $L \subset \Sym^2 W$. In this case $\PP(L)$
    intersects the secant variety of $v_2(\PP(W))$. Thus, for our purposes,
    the difference between $\Apolar{L}$ and $\Apolar{L+V}$ is negligible.

    The two components intersect along an irreducible $24$-dimensional set,
    which has the form $V \times D$, where $D \subset \Gr(3, \Sym^2
    V)$ is a divisor of degree two on the Grassmannian,
    which we call the \emph{smoothable divisor}.
    Hence smoothability is independent of the embedding (this is true for all
    finite schemes~\cite[Theorem~1.1]{jabu_jelisiejew_smoothability}).
    The equation of $D$ is obtained as
    follows: let $W \subset \Sym^2 V$ be a $3$-dimensional space spanned by
    $q_1, q_2, q_3$ that correspond to $4\times 4$ matrices $A_1$, $A_2$,
    $A_3$.  Then the equation is the Pfaffian of the $12 \times 12$ matrix
    \begin{equation}\label{eq:smoothableDivisorEquation}
        \begin{bmatrix}
            0    & A_1 & -A_2\\
            -A_1 &  0  &  A_3\\
            A_2  & -A_3&  0
        \end{bmatrix}.
    \end{equation}

    Salmon gave a geometric description of the smoothable divisor, by showing
    that its equation vanishes on $q_1, q_2, q_3$ if and only if there exists a cubic $F$
    and linear differential operators $d_1, d_2, d_3$ such that $d_iF = q_i$.
    Below we give another proof of this fact.

    \begin{prop}\label{ref:cubicvssmoothable:prop}
        Let $L\subset \Sym^2 V$ be $3$-dimensional
        and $R = \Spec \Apolar{L + V}$ be the corresponding
        zero-dimensional scheme of degree eight.
        The following conditions are equivalent:
        \begin{enumerate}
            \item\label{it:special} there is a cubic $F$ such that
                $L$ is spanned by three partial derivatives of~$F$,
            \item\label{it:smoothable} the scheme $R$ is smoothable.
        \end{enumerate}
    \end{prop}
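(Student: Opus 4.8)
The plan is to avoid building the cubic and the smoothing simultaneously, and instead to reduce the whole equivalence to a single inclusion together with a dimension count. Condition~\ref{it:special} cuts out the subset $S_1 \subseteq \Gr(3, \Sym^2 V)$ of those $L$ lying in the space of first partials $V^\vee\circ F$ of some cubic $F$; as the image of the irreducible incidence variety $\{(F,L) : L \subseteq V^\vee\circ F\}$, it is irreducible, and a dimension count bounding the fibres of the incidence map (together with the fact, noted in the introduction, that a generic $L$ is \emph{not} of this form, so $S_1 \neq \Gr(3,\Sym^2 V)$) shows $\dim S_1 = 20$, i.e.\ $S_1$ is a divisor. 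On the other hand, by the description of $\operatorname{Hilb}_8(\mathbb{A}^4)$ recalled above, a scheme with Hilbert function $(1,4,3)$ supported at the origin corresponds to a point $(0,L)\in V\times\Gr(3,\Sym^2 V)\cong\Hilbonefourthree$, and $R$ is smoothable precisely when $(0,L)$ lies in the intersection $V\times D$ of the two components, that is, when $L\in D$. Since $V\times D$ is irreducible of dimension $24$, the smoothable locus $D$ is an irreducible divisor of dimension $20$. Thus both conditions single out irreducible divisors $S_1,D\subseteq\Gr(3,\Sym^2 V)$ of the same dimension, and it suffices to prove one inclusion $S_1\subseteq D$: equality, hence the equivalence of~\ref{it:special} and~\ref{it:smoothable}, then follows automatically.

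To establish $S_1\subseteq D$ it is enough, as $D$ is closed and $S_1$ irreducible, to show that a \emph{general} cubic-derived $R$ is smoothable. Here I would invoke Corollary~\ref{ref:extendingrationas:cor}. The scheme $R$ is the affine base locus of the projection $\varphi=\pi_L\circ v_2$ (regular for general such $L$), which is given by the seven quadrics spanning $L^\perp=\Ann{L}_2\subset\Sym^2 V^\vee$, so $R$ is a $\kk^*$-limit, and hence smoothable, as soon as $\varphi$ extends to a rational map $\psi\colon\PP(V')\dashrightarrow\PP(W)$ with $\dim V'=5$ whose base locus is eight reduced points. Choosing coordinates so that $L=\spann{\partial_1 F,\partial_2 F,\partial_3 F}$, one computes $L^\perp=\{q\in\Sym^2 V^\vee : q\circ F\in\kk x_0\}$, where $x_0$ is dual to the omitted partial. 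I would then lift each such $q$ to $\tilde q = q+\xi\,\ell_q+c_q\xi^2\in\Sym^2(V')^\vee$, with $\xi$ dual to the new coordinate, by a linear rule $q\mapsto(\ell_q,c_q)$ read off from $F$, and show that the common zero locus $\{\tilde q=0 : q\in L^\perp\}$ consists of eight reduced points. Since $\varphi$ is an embedding, no base points escape to the hyperplane $\PP(V)$ at infinity, so these eight points are affine and Corollary~\ref{ref:extendingrationas:cor} exhibits $R$ as their $\kk^*$-limit.

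The main obstacle is exactly the construction and control of these eight points: one must produce the correct linear lift $q\mapsto(\ell_q,c_q)$ from $F$ and then verify that the resulting overdetermined system of seven inhomogeneous quadrics in $\mathbb{A}^4$ cuts out a \emph{reduced}, length-eight scheme rather than something smaller, non-reduced, or positive-dimensional. Concretely this is a general-position (Cayley--Bacharach) statement about the quadric hull of the points, and it is where the specific shape $q\circ F\in\kk x_0$ of $L^\perp$ enters; verifying it for one explicit $F$ (say by a direct computation, as in the Example) already suffices, since openness of smoothability and irreducibility of $S_1$ then propagate it to the general member. If these eight points prove unwieldy, an appealing alternative is to use that $L\subseteq V^\vee\circ F$ exhibits $R$ as a colength-two subscheme of the Gorenstein scheme $G=\Spec\Apolar{F}$ of Hilbert function $(1,4,4,1)$ and degree $10$; smoothing $G$ compatibly with this subscheme (peeling off the two extra points) would again give $R$ smoothable, hence $S_1\subseteq D$ and the theorem.
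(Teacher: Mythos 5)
Your overall architecture is a genuinely different and appealing route: instead of proving the hard implication \ref{it:smoothable}$\implies$\ref{it:special} directly, you try to deduce it for free by showing that both conditions cut out irreducible closed subsets of $\Gr(3,\Sym^2 V)$ of the same dimension $20$, so that the ``easy'' inclusion forces equality. (The paper does the opposite: it cites \cite[Rmk~5.9]{CEVV} for \ref{it:special}$\implies$\ref{it:smoothable} and proves \ref{it:smoothable}$\implies$\ref{it:special} directly, by reducing to $\kk^*$-limits and invoking the Eisenbud--Popescu extension of $8$ general points of $\PP^4$ to $10$ arithmetically Gorenstein points, whose hyperplane section is $\Apolar{F}$ for a cubic $F$.) However, as written your argument has two genuine gaps, and both sit exactly at the load-bearing points. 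First, the claim $\dim S_1=20$ is asserted, not proved, and it is not a routine count. The incidence variety $\{(F,L): L\subseteq V^\vee\circ F\}$ has dimension $19+3=22$, while $\dim\Gr(3,\Sym^2 V)=21$; for your argument you need the generic fibre over the image to be \emph{exactly} $2$-dimensional. The obvious fibre you can exhibit over $L=\spann{\partial_1F_0,\partial_2F_0,\partial_3F_0}$ is only the one-parameter family $F_0+c\,x_0^3$, which would give $\dim S_1=21$, i.e.\ $S_1=\Gr(3,\Sym^2 V)$ --- absurd. So there must be a hidden extra one-parameter family of cubics $F$ (equivalently, of triples $(d_1,d_2,d_3)$ solving the symmetry conditions $d_iq_j=d_jq_i$) representing a general $L\in S_1$, and producing it, together with an upper bound showing the fibre is not $3$-dimensional (which would make $S_1$ a \emph{proper} subvariety of $D$ and collapse the whole equality argument), is real work that the proposal does not do. A related minor point: the incidence condition $L\subseteq V^\vee\circ F$ is not obviously closed where $\dim(V^\vee\circ F)$ drops (cones), so identifying $S_1$ with the closed image of the incidence variety, which your direction \ref{it:smoothable}$\implies$\ref{it:special} relies on, also needs a word.

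Second, the inclusion $S_1\subseteq D$ is itself left unproved. Your primary route via Corollary~\ref{ref:extendingrationas:cor} is only a plan: you explicitly defer the construction of the lift $q\mapsto(\ell_q,c_q)$ and the verification that the seven lifted quadrics in $\PP^4$ cut out eight reduced points, and this verification (even for a single explicit $F$) is precisely the content of the step. Your fallback --- smooth the degree-$10$ Gorenstein scheme $G=\Spec\Apolar{F}$ and ``peel off two points'' --- runs head-on into the subtlety recorded in Remark~\ref{ref:mojezeby:rmk}: a given smoothing of $G$ does \emph{not} in general restrict to a smoothing of a colength-two subscheme $R\subset G$, and one must choose the smoothing compatibly (in the paper's language, the residual tangent vector must lie in a plane spanned by two of the ten points). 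So while the conclusion $S_1\subseteq D$ is true, neither of your two mechanisms establishes it as stated; the cleanest fix is simply to cite \cite[Rmk~5.9]{CEVV}, as the paper does, after which your dimension-count strategy would still need the fibre bound from the first paragraph to close the argument.
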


    \begin{proof}
        The implication~\ref{it:special} $\implies$ \ref{it:smoothable} is
        noted in \cite[Rmk~5.9]{CEVV} and we refer to this work for details (see also
        Remark~\ref{ref:mojezeby:rmk}).

        We will prove the implication \ref{it:smoothable} $\implies $\ref{it:special}.
        By the obvious parameterisation, the set of $L$ satisfying
        Condition~\ref{it:special} is closed. Thus it is enough to show that each
        point $[R]$ is a limit of points in the smoothable divisor satisfying~\ref{it:special}.
        By Corollary~\ref{ref:smoothabilityofgraded:cor} the set of
        smoothable, irreducible schemes with Hilbert function $(1, 4, 3)$ is
        irreducible and the set of $\kk^*$-limits is dense inside it. Hence we
        may consider only $\kk^*$-limits.

        Let $R = \lim_{t\to 0} t\Gamma$ be a $\kk^*$-limit in $\mathbb{A}^4$
        with Hilbert function $(1, 4, 3)$.
        Compactify $\mathbb{A}^4$ to $\PP^4$ and consider $\Gamma \subset
        \PP^4$.
        By~\cite[Theorem~8.6]{Eisenbud__Popescu__Gale_geometry} this set can
        be enlarged to a tuple $\Gamma'$ of ten arithmetically Gorenstein points of
        $\PP^4$. Since $R = \hat{\Gamma} \cap H$ is a hyperplane section of the cone
        over $\Gamma$, it lies in a hyperplane section $S = \hat{\Gamma'}\cap H$ of the cone over
        $\Gamma'$. This section is zero-dimensional Gorenstein with Hilbert
        series $(1, 4, 4, 1)$, provided that $H$
        intersects $\Gamma'$ properly. This condition can be always achieved
        by perturbing $H$, which perturbs $R$ in the smoothable divisor.
        Hence, at least in any
        neighbourhood of $R$, we get a point $R_{\varepsilon} \subset
        S_{\varepsilon}$, where $S_{\varepsilon}$ is Gorenstein with Hilbert
        function $(1, 4, 4, 1)$.  We conclude that $S_{\varepsilon} =
        \Apolar{F_{\varepsilon}}$ for some cubic and that
        $L_{\varepsilon}$ is contained in the partials of this cubic.
    \end{proof}

    \begin{remark}\label{ref:mojezeby:rmk}
        In the proof of Proposition~\ref{ref:cubicvssmoothable:prop} we got an
        inclusion of $R$ into $S$ whose Hilbert function is $(1, 4, 4, 1)$ and
        even an inclusion of curves whose hyperplane sections are $R$ and $S$.
        Moreover, Condition~\ref{it:special} asserts that for a given $S$ \emph{every} $R
        \subset S$ is smoothable. But for a given $S$ and a curve $C_S$ smoothing
        it as above  $R$ is not necessarily smoothed by a sub-curve $C_R$. Indeed,
        for a fixed $C_S$ the existence of $C_R$ is equivalent to the
        existence of $C_{R'}$, where $R' \subset S$ is the residuum of $R$
        in $S$. In our case $R'$ is a tangent vector in $S$. One sees
        immediately that the tangent vectors which do lift lie in the planes
        spanned by the two of the $10$ points of $C_S$. In particular not all
        tangent vectors lift.
    \end{remark}

    \subsection{Smoothable schemes of degree $8$ and special quadric morphisms
    $\PP^3\to \PP^6$}\label{ssec:schreyerfest}

    Let $V$ have dimension four.
    In this section we consider morphisms $\PP(V)\to \PP^6$ given by quadrics.
    They factor as second Veronese $v_2$ composed with a projection
    \[
        \pi_L\colon \PP(\Sym^2 V)\dashrightarrow \PP^6
    \]
    from an $L \subset \Sym^2 V$. Let $X_L = \pi_L(v_2(\PP^3))$.
    A general $\PP(L)$ does not intersect the secant variety of
    $v_2(\PP^3)$, hence for such $L$ the variety $X_L$ is isomorphic to
    $\PP(V)$ via $\pi_L\circ v_2$.

    Denote by $L^{\perp} \subset \Sym^2 V^\vee$ the space perpendicular to $L$,
    then naturally $\PP^6 = \PP((L^{\perp})^\vee)$.
    The pullback of sections from $\PP^6$ to $X_L$ and then to
    $\PP (V)$ gives a restriction map $H^{0}(\OO_{\PP ^6}(3))\to
    H^0(\OO_{\PP(V)}(3))$, which algebraically reads
    \begin{equation}\label{eq:degenerationOnFiber}
        \Sym^3 L^{\perp} \to \Sym^6 V^\vee.
    \end{equation}
    The spaces on both sides of this map have dimension $84$ and in
    fact for a general choice of $L$ this morphism is an isomorphism.
    Let $\Gshort := \Gproj$ be the parameter space for such $L$.
    The above discussion globalises as follows: the tautological sequence
    \[
        0 \to \cT \to \OO_{\Gshort}\tensor \Sym^2 V \to \cQ\to 0
    \]
    dualises to $0\to \cQ^{\vee} \to \OO_{\Gshort}\tensor \Sym^2 V^{\vee} \to
    \cT^{\vee} \to 0$. The multiplication $\Sym^3 \Sym^2 V^{\vee} \to \Sym^6
    V^{\vee}$ induces a map
    \begin{equation}\label{eq:degenerationGlobal}
        \mu\colon \Sym^3 \cQ^{\vee} \to \OO_{\Gshort} \tensor \Sym^6 V^{\vee},
    \end{equation}
    whose restriction to a point $[\PP(L)]\in \Gshort$ is
    exactly~\eqref{eq:degenerationOnFiber}. Thus the jump locus has a natural
    scheme structure at the degeneracy locus of $\mu$. A Schubert calculation
    shows that the degree of the jump locus is $36$.

    As described in Section~\ref{ssec:apolarity}, a point $[\PP(L)]$ of
    $\Gproj$ has an associated zero-dimensional scheme $R = \Spec \Apolar{L +
    V}$, which is of length $8$ and for a general $L$ it is equal to $\Spec
    \Apolar{L}$ (so that adding $V$ may be thought of as stabilisation of length).

    Now comes the Leitmotiv of this work: we show that
    smoothability implies the jump and even more.
    The authors discussed the problem just before the conference dinner at
    Schreyerfest and the first computational evidence for
    Theorem~\ref{ref:schreyerfestdinner:thm} was
    obtained just before the dessert. Hence we call the following
    \emph{Schreyerfest-dinner theorem}.

    For a three-dimensional $L \subset \Sym^2 V$ we say that \emph{$L$
    satisfies the jump condition} if the image $X_L$ of $v_2\PP(V)$ under the
    projection from $\PP(L)$ lies on a cubic. We say that the jump is at least
    by $l$ if $X_L$ lies on an (at least) $l$-dimensional space of cubics.

    \begin{thm}[Schreyerfest-dinner]\label{ref:schreyerfestdinner:thm}
        Let $L \subset \Sym^2 V$ be a linear subspace of dimension three.
        Suppose that $R = \Spec \Apolar{L + V}$ is smoothable.
        The $\PP(L)$ satisfies the jump condition and the jump is at least by
        three.
    \end{thm}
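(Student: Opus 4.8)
The plan is to prove the bound for a general smoothable $L$ and then spread it out by semicontinuity. Note first that the jump equals the corank of the fibrewise map~\eqref{eq:degenerationOnFiber}, i.e.\ of $\mu$ in~\eqref{eq:degenerationGlobal} restricted to $[\PP(L)]$; hence the locus $\{L : \text{jump}\ge 3\}$ is where $\operatorname{rank}\mu$ drops by at least three, a determinantal and therefore closed subset of $\Gr$. By Proposition~\ref{ref:cubicvssmoothable:prop} the smoothable $L$ are exactly those spanned by three partials of a cubic, forming the smoothable divisor $D$, which is irreducible and whose general point is a $\kk^*$-limit (Corollary~\ref{ref:smoothabilityofgraded:cor}). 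Thus it suffices to produce three independent cubics through $X_L$ for a general $L\in D$: upper semicontinuity then forces $\text{jump}\ge 3$ on all of $D$, covering in particular the smoothable $L$ that are not $\kk^*$-limits.

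Next I would reinterpret cubics through $X_L$ as $\Sym^3 L^{\perp}\cap I$, where $I\subset \Sym^3\Sym^2 V^\vee$ is the space of cubics vanishing on $v_2(\PP(V))$, of dimension $220-84=136$. Under the $\operatorname{SL}(V)$-action $I$ splits into two irreducibles; the distinguished $10$-dimensional summand $\mathcal{S}$ is exactly the space of cubics vanishing on the secant variety $\Sec(v_2(\PP(V)))$, spanned by the symmetric $3\times 3$ minors of the generic symmetric matrix. These are the Segre cubics. Every $c\in\mathcal{S}$ already vanishes on $v_2(\PP(V))$, so the only issue is whether it \emph{descends} along $\pi_L$, i.e.\ whether $c\in \Sym^3 L^\perp$; concretely, $c$ descends precisely when it is annihilated by the three derivations of $\Sym^\bullet\Sym^2 V^\vee$ dual to a basis $q_1,q_2,q_3$ of $L$. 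Calling this annihilation map $D$, the target becomes $\dim\ker D\ge 3$.

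The heart of the matter, and the step I expect to be the main obstacle, is computing $\ker D$ when $L=\langle \partial_1 F,\partial_2 F,\partial_3 F\rangle$ is spanned by partials of a cubic $F$. The key simplification is that the derivation dual to $\partial_\ell F$ depends \emph{linearly} on $\ell\in V^\vee$, since $\partial_\ell F=\sum_k \ell_k\,\partial_k F$; hence the three derivations cutting out $\ker D$ are the values at $\ell_1,\ell_2,\ell_3$ of a single linear map $\tilde T\colon \mathcal{S}\to V\otimes \Sym^2\Sym^2 V^\vee$ attached to $F$. Writing $v_0\in V$ for the (general) generator of $\langle \ell_1,\ell_2,\ell_3\rangle^{\perp}$, one obtains $\ker D=\tilde T^{-1}\!\big(\langle v_0\rangle\otimes \Sym^2\Sym^2 V^\vee\big)$. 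The assertion $\dim\ker D\ge 3$ thus amounts to the statement that the $V$-component of $\operatorname{im}\tilde T$ is degenerate enough that, for general $F$ and general $v_0$, three independent Segre cubics have their $\tilde T$-image supported on the single line $\langle v_0\rangle$. Establishing this rank drop for partials of a cubic is the crux, and I expect it to require either the explicit structure of the $3\times 3$ minors together with the apolarity identity $3F=\sum_i e_i\,\partial_i F$, or a direct verification for one sufficiently general $F$ (enough by irreducibility and semicontinuity). One must avoid symmetric choices such as the Fermat cubic, for which $\ker D$ is far larger than three and hence unrepresentative.

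Granting the rank drop, three independent Segre cubics through $X_L$ exist for general $L\in D$, so the jump is at least three there; by the semicontinuity reduction of the first paragraph the same holds for every smoothable $L$, which is the assertion. This also pinpoints the picture behind the last sentence of Theorem~\ref{ref:mainintro:thm}: generically the other summand of $I$ contributes nothing, so the space of cubics through $X_L$ is precisely the $3$-dimensional space of descending Segre cubics.
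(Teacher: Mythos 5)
Your reduction to a general $\kk^*$-limit via Corollary~\ref{ref:smoothabilityofgraded:cor} and the closedness of the locus where the jump is at least three is exactly how the paper's proof begins, and your representation-theoretic setup is sound: the degree-three part of the ideal of $v_2(\PP(V))$ does decompose into two $\operatorname{SL}(V)$-irreducibles with the $10$-dimensional summand spanned by the $3\times 3$ minors of the generic symmetric matrix, a cubic lies in $\Sym^3 L^{\perp}$ precisely when it is killed by contraction with $L$, and for $L$ spanned by partials of $F$ these contractions assemble into a map linear in $\ell\in V^\vee$. Remark~\ref{ref:dolgachev:rmk} confirms that the cubics one ultimately finds are Segre cubics, so you have correctly identified where they must live.

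But the entire content of the theorem is the claim you label ``the crux'' and then assume: that for general $F$ and $v_0$ the composite $\mathcal{S}\to \left(V/\langle v_0\rangle\right)\otimes\Sym^2\Sym^2 V^\vee$ has kernel of dimension at least $3$. Nothing in your argument establishes this. The target has dimension $3\cdot 55=165$, so a generic linear map from the $10$-dimensional space $\mathcal{S}$ is injective; the required rank drop is a genuine degeneracy special to partials of a cubic, and ``granting the rank drop'' is granting the theorem. The paper closes exactly this gap by a different mechanism: it extends $\pi_L\circ v_2$ to a map $\PP^4\dashrightarrow\PP^6$ whose base locus is a general $8$-tuple $\Gamma_8$ (Corollary~\ref{ref:extendingrationas:cor}), applies Gale duality \cite{Eisenbud__Popescu__Gale_geometry} to produce a pencil of elliptic normal curves through $\Gamma_8$ sweeping out a cubic scroll, and lets each curve $E$ furnish a cubic $C_E$ through $X_L$ via the inverse of the quadro-cubic Cremona transformation $c_E$ of $\PP^4$ defined by the quadrics through $E$. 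Three independent cubics are then obtained by showing that the resulting map $syz\colon\PP^1\to\PP\left(\Sym^3 L^{\perp}\right)$ is neither constant nor a line, using the three quadrics cutting out the scroll. To salvage your route you would need to actually verify the rank drop, either structurally (the elliptic pencil is, in effect, the paper's structural reason) or by an explicit computation for one sufficiently general $F$ --- a finite check, but one that must be carried out before the argument is a proof.
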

    \begin{proof}
        \def\Gamfour{\Gamma_8}%
        \def\Gamtwo{\Gamma'_8}%
        The jump condition is closed, so it is enough to prove it for a
        general $L$. Hence, by Corollary~\ref{ref:smoothabilityofgraded:cor}
        we may assume that $R$ is a $\kk^*$-limit and that its ideal is
        generated by quadrics. By
        Corollary~\ref{ref:extendingrationas:cor} the map
        $\pi_L\circ v_2:\PP(V) = \PP^3 \to \PP^6$ extends to a rational map $\PP^4
        \dashrightarrow \PP^6$ whose base locus $\Gamfour$ is a tuple of $8$ points.
        Since a general tuple of such $8$ points gives $R$ as above, we may
        assume that $\Gamfour$ is general. Also the ideal of $\Gamfour$ is
        generated by seven quadrics.

        Now we apply Gale duality. We refer the reader to the beautiful
        paper~\cite{Eisenbud__Popescu__Gale_geometry}.
        Since $\Gamfour \subset \PP^4$ is general, its Gale dual exists by
        \cite[Corollary~2.4]{Eisenbud__Popescu__Gale_geometry} and is a
        $8$-tuple $\Gamtwo \subset \PP^2$, which we may also assume to be
        general. Hence, there exists $q\in \PP^2$ such that $\Gamtwo \cup
        \{q\}$ is a complete intersection of two elliptic curves. Also, in this case the
        Gale duality can be made explicit:
        starting from $\Gamtwo \subset \PP^2$ one takes the second Veronese
        reembedding and projects from $q$, obtaining $\Gamfour \subset \PP^4$,
        see~\cite[Corollary~2.6 and p.~138-9]{Eisenbud__Popescu__Gale_geometry}.
        In any case, the pencil of elliptic curves passing through
        $\Gamtwo$ gives a pencil $\mathcal{E}$ of projectively normal elliptic curves
        through $\Gamfour$,~\cite[Corollary~3.2]{Eisenbud__Popescu__Gale_geometry}.

        Each curve $E$ in this pencil is arithmetically Gorenstein and cut out by $5$
        quadrics. These quadrics define a rational map $c_E:\PP^4 \dashrightarrow
        \PP^4$ which is \emph{birational} with inverse $c'_E$  given by
        cubics~\cite{Crauder_Katz__Cremonas_with_smooth_fundamental_locus}.
        Now the image $c'_E(c_E(\PP^3))$ spans at most a $\PP^3$, hence the
        coordinates of $c'_E$ give a cubic equation between five of the
        quadrics defining $\PP(V) = \PP^3 \to \PP^6$. Such relation gives a
        cubic equation $C_E \in \PP\left(\Sym^3 L^{\perp}\right)$ of the
        image. The situation is summarised in the following diagram
        \begin{equation}\label{eq:cremonaEllipic}
            \xymatrix{\PP^3 \ar[r]^{\pi_L\circ v_2}\ar[d]^{lin}& \PP^6\ar@{-->}[d]^{lin}\\
            \PP^4 \ar@{-->}[r]^{c_E} & \PP^4}
        \end{equation}
        The argument can be made global to obtain a
        map $syz:\PP^1 \to \PP\left(\Sym^3 L^{\perp}\right)$.

        It remains to show that there are at least \emph{three} linearly independent cubic
        equations. We will do this by showing that $syz$ is not constant or a
        line.
        This requires working with all curves from the pencil $\mathcal{E}$.
        These curves sweep out a rational cubic scroll which is the image of
        $\PP^2$. The ideal of this scroll is generated by three quadrics --- maximal minors
        of a $2\times 3$ matrix. Hence the containments between the quadrics are as follows:
        \[
            \{\mbox{$3$ in the ideal of scroll}\} \subset \{\mbox{$5$
            cutting out an elliptic curve}\} \subset \{\mbox{$7$
            defining the $8$-points}\}.
        \]
        Then any elliptic curve defines a point in $\Gr(2, 4)$ corresponding to
        the space of its quadric equations modulo the quadrics vanishing on
        the scroll. We obtain a map $eq:\PP^1 \to \Gr(2, 4)$.

        If $syz$ was constant, there would be a cubic relation between
        the three quadrics defining the scroll. But no such relation
        exists.
        Clearly $syz$ factors through $eq$. If the image of $syz$ was a line, also
        the image of $eq$ would be a line in the Pl\"ucker embedding. But such a line corresponds to a
        family of projective lines on a plane passing through a point. Such a point would
        give a quadric not in the ideal of the scroll but vanishing on all
        elliptic curves. This contradicts the fact that the elliptic curves fill the scroll.
    \end{proof}

    \begin{remark}\label{ref:dolgachev:rmk}
        The cubics $C_E$ appearing in the proof of
        Theorem~\ref{ref:schreyerfestdinner:thm} are called Segre
        cubics, see~\cite[p.~4]{Dolgachev__Segre_and_his_cubics}.
    \end{remark}

    One wonders to what extent the statement of
    Theorem~\ref{ref:schreyerfestdinner:thm} can be reversed. For this
    consider first the $\PP(L) \subset \PP(\Sym^2 V)$ which intersect the
    secant (then $\Sym^3 L^{\perp}\to \Sym^6 V^\vee$ is not an isomorphism).
    \begin{lem}\label{ref:secant:lem}
        Let $\PP(L)$ be a projective plane intersecting the second secant of
        $v_2(\PP(V))$. Then $X_L$ lies on an at least three dimensional
        space of cubics.
    \end{lem}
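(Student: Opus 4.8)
The plan is to reduce everything to a statement about the multiplication map~\eqref{eq:degenerationOnFiber} together with a single rank-two quadric. Recall that the cubics containing $X_L$ are exactly the kernel of the multiplication map $m\colon \Sym^3 L^\perp \to \Sym^6 V^\vee$, and that both its source and target have dimension $84$. Hence the number of independent cubics through $X_L$ equals $84 - \dim\operatorname{im}(m)$, and $\operatorname{im}(m)$ is precisely the degree-six piece $(L^\perp)^3$ of the subalgebra of $\Sym V^\vee$ generated by the seven quadrics spanning $L^\perp$. So it suffices to prove that $(L^\perp)^3$ omits at least three dimensions inside $\Sym^6 V^\vee$.

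Since $\PP(L)$ meets the second secant, $L$ contains a quadric $\theta$ of rank at most two; I would write $\theta = ab$ with $a,b \in V$ (allowing $a = b$ in the rank-one case). Then $L^\perp \subseteq \theta^\perp$, so $(L^\perp)^3 \subseteq (\theta^\perp)^3$, and it is enough to bound the larger subalgebra $(\theta^\perp)^3$ from above --- in particular the two generators of $L$ other than $\theta$ play no role. I would choose a basis $e_0,\dots,e_3$ of $V$ with $a = e_0$, $b = e_1$ and dual coordinates $y_0,\dots,y_3$ on $V^\vee$; under apolarity the pairing of a quadric with $\theta = e_0e_1$ simply reads off its $y_0y_1$-coefficient, so $\theta^\perp$ is exactly the space of quadrics with no $y_0y_1$ monomial. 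The only monomials in $\theta^\perp$ involving solely $y_0,y_1$ are then $y_0^2$ and $y_1^2$.

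The core of the argument is purely combinatorial. A sextic monomial $y_0^a y_1^b y_2^c y_3^d$ can lie in $(\theta^\perp)^3$ only if it is a product of three quadratic monomials from $\theta^\perp$; when $c = d = 0$ each such factor must be $y_0^2$ or $y_1^2$, forcing both $a$ and $b$ to be even. Consequently the three monomials $y_0^5 y_1$, $y_0^3 y_1^3$, $y_0 y_1^5$ do not belong to $(\theta^\perp)^3$, and as they are linearly independent we get $\dim (\theta^\perp)^3 \le 81$. Therefore $\dim (L^\perp)^3 \le 81$ and the corank of $m$ is at least three, which is the claim. In the rank-one case $\theta = e_0^2$ I would argue identically with $\theta^\perp$ the quadrics lacking $y_0^2$; there every sextic with $y_0$-degree at least four is missing, so the bound is only easier.

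I expect the one delicate point to be the reduction itself: recognising that the desired cubics are \emph{forced} by the single rank-$\le 2$ generator of $L$ and are insensitive to the rest, via the inclusion $(L^\perp)^3 \subseteq (\theta^\perp)^3$ of cubed apolar subalgebras. Once this is in place the estimate is a one-line monomial count, and it even explains why the threshold is exactly three, since for a rank-two $\theta$ precisely those three monomials are omitted. The only bookkeeping to watch is the apolarity normalisation identifying $\theta^\perp$ with a coordinate hyperplane of quadrics, together with the (automatic) fact that products of monomials in $\Sym V^\vee$ are again monomials, so that the reachability count suffers no cancellation.
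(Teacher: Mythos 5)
Your proposal is correct and is essentially the paper's own argument: both exploit that $L^\perp$ lies in the perp of the rank-$\le 2$ quadric $xy$, hence omits the monomial $\partial_x\partial_y$, so that $(L^\perp)^3$ misses (equivalently, annihilates) the three sextics $x^5y$, $x^3y^3$, $xy^5$, giving corank at least three in the $84\to 84$ multiplication map. The only cosmetic difference is that the paper disposes of the rank-one case by closedness of the jump locus, whereas you treat it directly; both are fine.
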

    \begin{proof}
        The secant is the locus of quadrics of rank at most two. The jump
        locus is closed, so we may assume that $\PP(L)$ intersects the secant
        in a rank two quadric $xy$. Then no form of $L^{\perp}$ contains the
        monomial $\partial_x \partial_y$. Enlarge $x$, $y$ to a basis
        $x$, $y$, $z$, $t$ of $V$. Then $L^{\perp} \subset \spann{\partial_x^2,
        \partial_y^2} + \spann{\partial_z, \partial_t}\cdot V^{\vee}$.
        Observe that $(L^{\perp})^3 \subset \Sym^6 V^{\perp}$ annihilates the forms $x^5y$, $x^3y^3$
        and $xy^5$. Hence,
        \[
            \dim((L^{\perp})^3) \leq \dim(\Sym^6 V^{\perp})-3 = \dim(\Sym^3 L^{\perp}) - 3
        \]
        and we obtain an (at least) three
        dimensional space of cubics containing $X_L$.
    \end{proof}
    \begin{example}
        Note that when $L \subset \Sym^2 W$ for some three-dimensional subspace $W
        \subset V$, then $\PP(L)$
        insersects the secant variety of $v_2(\PP(W))$, because the latter is
        a divisor in $\PP(\Sym^2 W)$. Thus, $\PP(L)$ intersects
        the secant variety of $v_2(\PP(V))$.
    \end{example}
    Now we prove a stronger form of Theorem~\ref{ref:schreyerfestdinner:thm}.
    For the definition of jump condition, see the paragraph above
    Theorem~\ref{ref:schreyerfestdinner:thm}.
    \begin{thm}\label{ref:schreyerfestdinnerimproved:thm}
        Let $[\PP(L)]\in \Gproj$ be a projective plane.
        The following conditions are equivalent:
        \begin{enumerate}
            \item\label{it:ptone} $\PP(L)$ satisfies the jump condition,
            \item\label{it:pttwo} $\PP(L)$ satisfies the jump condition and the jump is at
                least by three,
            \item\label{it:ptthree} either $\PP(L)$ intersects the secant variety of $v_2(\PP(V))$
                or the zero-dimensional scheme associated to $L$ is
                smoothable.
        \end{enumerate}
    \end{thm}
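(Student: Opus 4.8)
The three implications split into two routine ones and a single hard one. The implication \ref{it:pttwo} $\Rightarrow$ \ref{it:ptone} is immediate, and \ref{it:ptthree} $\Rightarrow$ \ref{it:pttwo} is a case distinction that is already settled: if $\PP(L)$ meets the secant I invoke Lemma~\ref{ref:secant:lem}, while if the associated scheme is smoothable I invoke Theorem~\ref{ref:schreyerfestdinner:thm}; in either case the jump is at least by three. So everything reduces to \ref{it:ptone} $\Rightarrow$ \ref{it:ptthree}. Writing $J \subset \Gproj$ for the jump locus (the $[\PP(L)]$ on which $X_L$ lies on a cubic), $D$ for the smoothable divisor, and $\Sigma$ for the locus of $[\PP(L)]$ with $\PP(L) \cap \Sec(v_2(\PP(V))) \neq \emptyset$, the two containments $D \subseteq J$ and $\Sigma \subseteq J$ just invoked mean that the whole statement is equivalent to the set-theoretic equality $J = D \cup \Sigma$. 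The plan is to prove this equality at the level of divisor classes, which is tight enough to leave no room for spurious components.

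First I would record that $J$ is genuinely a divisor: the map $\mu$ of \eqref{eq:degenerationGlobal} is a morphism of two bundles of equal rank $84$ whose fibrewise restriction \eqref{eq:degenerationOnFiber} is an isomorphism for general $L$, so $J$ is the vanishing of $\det\mu$, an effective divisor, and its class is $36\,\sigma_1$ (this is the Schubert/Chern computation already announced before the statement, ``the degree of the jump locus is $36$''). Next I would bound the multiplicity of $J$ along $D$ and along $\Sigma$ from below. At a general point of $D$ the associated scheme is smoothable, so by Theorem~\ref{ref:schreyerfestdinner:thm} the fibre of $\mu$ has corank at least three; at a general point of $\Sigma$, Lemma~\ref{ref:secant:lem} gives corank at least three as well. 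Since the order of vanishing of the determinant of a square matrix along a reduced divisor is at least the generic corank along it (reduce to Smith normal form over the discrete valuation ring at the generic point of the divisor: at least three diagonal entries are divisible by a local equation), I obtain $J \geq 3D$ and $J \geq 3\Sigma$, and as $D$ and $\Sigma$ are distinct prime divisors, $J \geq 3D + 3\Sigma$ as effective divisors.

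It then remains to match degrees. The smoothable divisor has class $2\,\sigma_1$, while $\Sigma$ is the Chow divisor of the secant $\Sec(v_2(\PP(V)))$, the variety of symmetric $4\times 4$ matrices of rank at most two; the latter has dimension $6$ (complementary to a $\PP^2$ in $\PP^9$) and degree $10$, so $[\Sigma] = 10\,\sigma_1$. Consequently
\[
    [J] - 3[D] - 3[\Sigma] = 36\,\sigma_1 - 3\cdot 2\,\sigma_1 - 3\cdot 10\,\sigma_1 = 0,
\]
so the effective divisor $J - 3D - 3\Sigma$ has trivial class and is therefore zero. Hence $J = 3D + 3\Sigma$ and in particular $J_{\mathrm{red}} = D \cup \Sigma$, which is exactly \ref{it:ptone} $\Leftrightarrow$ \ref{it:ptthree}. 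I expect the main obstacle to be the bookkeeping that converts ``corank at least three'' into ``vanishing order at least three'' cleanly, together with the verification that $D$ and $\Sigma$ are distinct reduced prime divisors of the stated degrees, so that the tight numerical identity $36 = 6 + 30$ genuinely forces $J$ to carry no further divisorial components; the remaining ingredients—the class $36\,\sigma_1$ of $J$, the degree $10$ of the secant, and Lemma~\ref{ref:secant:lem} together with Theorem~\ref{ref:schreyerfestdinner:thm}—are already at hand.
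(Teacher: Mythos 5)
Your proposal is correct and follows essentially the same route as the paper: reduce to showing the jump locus equals $D \cup \Sigma$, use Theorem~\ref{ref:schreyerfestdinner:thm} and Lemma~\ref{ref:secant:lem} to get multiplicity at least three along each, and then match the degree $36 = 3\cdot(2+10)$ to rule out extra components. Your Smith-normal-form justification of ``corank at least three implies vanishing order at least three'' is a welcome explicit fleshing-out of a step the paper passes over in one clause, but it is not a different argument.
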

    \begin{proof}
        Clearly,~\eqref{it:pttwo} implies~\eqref{it:ptone}. By
        Theorem~\ref{ref:schreyerfestdinner:thm} and
        Lemma~\ref{ref:secant:lem} we have~\eqref{it:ptthree}
        implies~\eqref{it:pttwo}. It remains to prove that~\eqref{it:ptone}
        implies~\eqref{it:ptthree}. These conditions are both divisorial (for
        smoothability, see Section~\ref{ssec:HilbertEight}), so it remains to
        check their degrees.

        The (Pl\"ucker) degree of the locus of spaces intersecting the secant
        variety is $10$ and the degree of the smoothable divisor is $2$,
        see~\eqref{eq:smoothableDivisorEquation}.  By the discussion
        below  Map~\eqref{eq:degenerationGlobal} (before Theorem \ref{ref:schreyerfestdinner:thm}) , the jump locus has degree $36$.
        The jump on the secant and smoothable parts is both at least by three,
        so these divisors contribute to the jump locus with multiplicity at
        least three, hence in total they contribute by $3\cdot (10 + 2) = 36$
        to the degree. Thus the jump locus is equal to the union of those
        divisors.
    \end{proof}

    \subsection{Cremona transformations associated to octic
    surfaces}\label{ssec:Cremonas}
   Let us describe further the geometry of the Cremona transformations described in the proof of
    Theorem~\ref{ref:schreyerfestdinner:thm}. We claim that these transformations can be lifted to the
    Cremona transformations defined by octic surfaces, which we now recall.
    Consider a set $Z \subset \PP^2$ of eight points. The quartics through
    $Z$ define an embedding $\Bl_Z \PP^2 \to \PP^6$. The embedded
    surface $S_8 = S_8(Z) \subset \PP^6$ is a rational \emph{octic surface}.
    One checks that $S_8$ is contained in a singular scroll $X$ and in
    fact $S_8 = D_1 \cap D_2$ is a complete intersection of two linearly
    equivalent divisors inside the
    scroll~\cite[Section~3]{hulek_katz_schreyer__Cremonas_and_syzygies}.

    Moreover~\cite[Thm~3.2]{hulek_katz_schreyer__Cremonas_and_syzygies}, the
    quadrics through $S_8$ define a Cremona transformation $c_{S_8}\colon\PP^6
    \dashrightarrow \PP^6$, whose inverse is given by quartics through a model
    of $\PP^4$ blown in $8$ points~\cite{Semple_Tyrrell__SpecialCremonas}.
    If one takes a general $\PP^4$-section in $\PP^6$ then $S_8\cap \PP^4$
    is a tuple of eight points lying on a pencil of elliptic normal curves
    generated by $D_1 \cap \PP^4$ and $D_2\cap \PP^4$ and this pencil
    fills a cubic scroll $X\cap \PP^4$.  Each Cremona transformation related to an
    elliptic curve in this pencil is obtained from $\PP^6\dashrightarrow
    \PP^6$ by composing with a linear embedding and a linear projection.
    In consequence we obtain a further geometric description of our projection from $L$.

    \begin{prop}\label{ref:embeddingInOctic:prop}
        Let $\PP(L) \subset \PP(\Sym^2 V)$ be a $\PP^2$ corresponding to a
        general element of the smoothable divisor.
        Then there exists an octic
        surface $S_8$ and a linear embedding $\ell\colon\PP^3 \to \PP^6$ such that
        $\pi_L = \ell \circ c_{S_8}$:
        \[
            \xymatrix{\PP^3 \ar[r]^{\pi_L\circ v_2}\ar[d]^{lin}& \PP^6\ar@{=}[d]\\
            \PP^6 \ar@{-->}[r]^{c_{S_8}} & \PP^6}
        \]
    \end{prop}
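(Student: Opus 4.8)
The plan is to reuse the geometry already uncovered in the proof of Theorem~\ref{ref:schreyerfestdinner:thm} and to recognise the octic surface $S_8$ as the global object whose linear sections reproduce the pencil of elliptic normal curves appearing there. Since $\PP(L)$ is a general point of the smoothable divisor, Corollary~\ref{ref:smoothabilityofgraded:cor} lets me assume that $R$ is a $\kk^*$-limit with ideal generated by quadrics, and Corollary~\ref{ref:extendingrationas:cor} then extends $\pi_L\circ v_2\colon\PP^3\to\PP^6$ to a rational map $\psi\colon\PP^4\dashrightarrow\PP^6$ whose base locus is a tuple $\Gamma_8$ of eight general points, cut out by the seven quadrics through $\Gamma_8$. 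As in the proof of Theorem~\ref{ref:schreyerfestdinner:thm}, these eight points lie on a pencil $\mathcal{E}$ of elliptic normal curves filling a cubic scroll, and the Gale dual $\Gamma_8'\subset\PP^2$ is a general octuple.

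Next I would construct $S_8$ and pin down a linear $\PP^4\subset\PP^6$. Setting $Z=\Gamma_8'$, let $S_8=S_8(Z)\subset\PP^6$ be the associated octic surface, so that $c_{S_8}$ is defined by the seven quadrics through $S_8$~\cite{hulek_katz_schreyer__Cremonas_and_syzygies}. By the description recalled at the start of this subsection, a general $\PP^4$-section of $S_8$ is again a general octuple lying on a pencil of elliptic normal curves filling a cubic scroll $X\cap\PP^4$. The heart of the argument is to match these two packages of data: since eight general points on such a pencil are recovered projectively from their Gale dual $Z$~\cite{Eisenbud__Popescu__Gale_geometry}, I would argue that one can choose the linear $\PP^4\subset\PP^6$ so that $S_8\cap\PP^4=\Gamma_8$, together with its pencil $\mathcal{E}$ and scroll.

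Finally I would compare the defining linear systems. The seven quadrics through $S_8\subset\PP^6$ restrict to quadrics through $\Gamma_8\subset\PP^4$; a dimension count, namely $h^0(\II_{S_8}(2))=7=\binom{6}{2}-8=h^0(\II_{\Gamma_8}(2))$, together with genericity of the section, shows that this restriction map is an isomorphism, so $c_{S_8}$ precomposed with the linear inclusion $\PP^4\hookrightarrow\PP^6$ agrees with $\psi$. Restricting once more along the original hyperplane $\PP^3=\PP(V)\hookrightarrow\PP^4$ recovers $\pi_L\circ v_2$. Setting $\ell\colon\PP^3\hookrightarrow\PP^4\hookrightarrow\PP^6$ to be the composite (linear) embedding then yields $c_{S_8}\circ\ell=\pi_L\circ v_2$, which is exactly the commutativity of the asserted diagram.

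I expect the main obstacle to be the identification $S_8\cap\PP^4=\Gamma_8$ of the two eight-point configurations, that is, showing that the octic surface built from the Gale dual $Z$ reproduces, in a suitable linear section, exactly the base octuple $\Gamma_8$ together with its pencil $\mathcal{E}$ of elliptic curves. This rests on the Gale-duality dictionary of~\cite{Eisenbud__Popescu__Gale_geometry} and on the scroll structure from~\cite{hulek_katz_schreyer__Cremonas_and_syzygies}, and it requires checking that genericity is preserved throughout, so that the relevant restriction maps on spaces of quadrics remain isomorphisms and no base-point or saturation issues arise when passing from $\PP^6$ to $\PP^4$ and then to $\PP^3$.
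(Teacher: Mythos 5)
Your reduction to the statement ``$\Gamma_8$ is a linear section of an octic surface'' follows the paper exactly: lift $\pi_L\circ v_2$ to $\psi\colon\PP^4\dashrightarrow\PP^6$ with base locus the general octuple $\Gamma_8$ (Corollaries~\ref{ref:smoothabilityofgraded:cor} and~\ref{ref:extendingrationas:cor}), pass to the Gale dual, and obtain the pencil of elliptic normal curves filling a cubic scroll; your final step (the seven quadrics through $S_8$ restrict isomorphically onto the seven quadrics through $\Gamma_8$, via $7=\binom{6}{2}-8$) is also what the paper implicitly uses. The gap is in the middle, precisely at the step you yourself flag as the ``main obstacle'': you set $S_8=S_8(Z)$ with $Z=\Gamma_8'$ the Gale dual octuple in $\PP^2$, and then assert that some $\PP^4$-section of this particular surface equals $\Gamma_8$. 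This does not follow from the facts you cite. A $\PP^4$-section of $S_8(Z)=\Bl_Z\PP^2\subset\PP^6$ is the residual octuple $W$ of $Z$ in a complete intersection of two quartics through $Z$, re-embedded by $\left|\II_Z(4)\right|$ modulo that pencil; identifying such a $W$, for a suitable choice of pencil, with the Gale transform of $Z$ is a genuine claim about the linkage--Gale dictionary of \cite{Eisenbud__Popescu__Gale_geometry} that must be proved, not invoked --- a priori the sections of $S_8(Z)$ could be the Gale transforms of the residual octuples $W$ rather than of $Z$, i.e., you may have blown up the wrong eight points. Since the entire content of the proposition is the existence of an octic surface having $\Gamma_8$ as a linear section, leaving this identification unestablished leaves the proof incomplete.

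The paper avoids the issue by building $S_8$ from below rather than from above: it writes $\Gamma_8=D_1\cap D_2$ with $D_1,D_2\in|2H-R|$ on the cubic scroll surface $X\subset\PP^4$ (using $D_1.D_2=8$), extends $X$ to a four-dimensional cone scroll in $\PP^6$, and lifts $D_1,D_2$ to divisors $B_1,B_2\in|2H-R|$ there using the restriction exact sequence and the vanishing $h^1(\OO(H-R))=0$; then $S_8:=B_1\cap B_2$ is a rational octic surface by \cite[Section~3]{hulek_katz_schreyer__Cremonas_and_syzygies} and satisfies $S_8\cap\PP^4=\Gamma_8$ by construction, so no a posteriori matching of octuples is needed. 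If you wish to keep your top-down route you must supply the Gale/linkage identification; otherwise the divisor-lifting argument is the shorter path.
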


    \begin{proof}
        Since $\PP(L)$ corresponds to a general element of the smoothable
        divisor, there is a (suitably general) set of eight points
        $\Gamma\subset \PP^4$ such that $\pi_L\circ v_2$ lifts to a map $\PP^4
        \dashrightarrow \PP^6$ given by quartics
        through $\Gamma$, see
        Corollaries~\ref{ref:smoothabilityofgraded:cor}-\ref{ref:extendingrationas:cor}.
        The claim follows if we prove that $\Gamma$ is a linear section of an
        octic surface $S_8$, which we do below.
        Let $\Gamma'\subset \PP^2$ be the Gale transform of $\Gamma\subset \PP^4$.
        By Gale
        duality~\cite[Corollary~3.2]{Eisenbud__Popescu__Gale_geometry}, the
        cubics containing $\Gamma'\subset \PP^2$ give a pencil of
        elliptic normal curves containing $\Gamma$, which
        fill a smooth cubic scroll $X\subset \PP^4$ containing $\Gamma$.
        The Picard group of $X$ is generated by the hyperplane section of the scroll $H$ and the fiber $R$
        of the scroll. From the adjunction formula, the pencil of elliptic curves is a subsystem the system $|2H-R|$.
        Let $D_1$, $D_2$ be two elements from this subsystem. Since $D_1.D_2=8$ we infer that $D_1\cap D_2=\Gamma$.

        Let us consider cubic scrolls $Y\subset \PP^5$ and $Z\subset \PP^6$ such that
        $Z$ restricts to $Y$ and $Y$ to $X$. We can assume that $Y$ is smooth and $Z$
        is a cone over $Y$.  From the restriction exact sequence $$0\to
        \OO_Y(H-R)\to\OO_Y(2H-R)\to \OO_X(2H-R)\to 0$$ and the fact that
        $h^1(\OO_Y(H-R))=h^1(\OO_Z(H-R))=0$ we find divisors $B_1$
        and $B_2$ on $Z$ in the linear system $|2H-R|$ such that $B_1|_X=D_1$ and
        $B_2|_X=D_2$ (in fact we have a freedom of choice of $B_1$ and $B_2$).
        Then $B_1\cap B_2\cap \PP^4 = \Gamma$, so $B_1\cap B_2$ is a complete intersection.
        By~\cite[Section~3]{hulek_katz_schreyer__Cremonas_and_syzygies}, the
        intersection of two divisors from $|2H-R|$ is a rational octic surface $S_8\subset
        \PP^6$. Now, $\Gamma$ is the intersection of this $S_8$ with the
        linear subspace spanned by $X$.
    \end{proof}
    \begin{remark}
        By Proposition~\ref{ref:embeddingInOctic:prop} the elliptic curves $E$ constructed in the proof of
        Theorem~\ref{ref:schreyerfestdinner:thm} appear as linear sections of
        $S_8$ and the corresponding Cremona transformations $c_E$ come from
        $c_{S_8}$ composed with a linear embedding and projection. Note that
        $c_{S_8}$ has type $(2, 4)$ while all $c_E$ have type $(2, 3)$.
    \end{remark}
    Now we formally summarize how do the previous results add up to give our
    main result.
    \begin{proof}[Proof of Theorem~\ref{ref:mainintro:thm}]
        First, since $L$ does not intersect the secant of $v_2(\PP(V))$, by
        Subsection~\ref{ssec:HilbertEight} we have $\Apolar{L} =
        \Apolar{L+V}$.
        Let us recapitulate the proof of equivalence of
        Conditions~\ref{it:maincremona}-\ref{it:maindescription}. The
        equivalence of~\ref{it:mainsmoothable}
        and~\ref{it:maindescription} is proven
        in~\ref{ref:cubicvssmoothable:prop}. The equivalence of
        Conditions~\ref{it:mainjump}, \ref{it:mainjumpthree} and
        \ref{it:mainsmoothable} is given in Theorem
        \ref{ref:schreyerfestdinnerimproved:thm}. By
        Proposition~\ref{ref:embeddingInOctic:prop},
        Condition~\ref{it:mainsmoothable} implies Condition~\ref{it:maincremona}. Finally,
        if $\pi_L \circ v_2$ is a general restriction of a Cremona based in a rational
        octic surface, then $\Spec(\Apolar{L})$ is a section of a cone over
        $8$ points of that surface, hence is smoothable. Thus,
        Condition~\ref{it:maincremona} implies~\ref{it:mainsmoothable} and the
        proof of equivalences is concluded. The irreducibility of the
        smoothable divisor follows from the parametrization
        from~\ref{it:maindescription} and was proven in~\cite{CEVV}. For a general $L$ in this
        divisor, the
        existence of a three-dimensional space of Segre cubics follows from
        Remark~\ref{ref:dolgachev:rmk}. If $I(X_L)_3$ was
        at least four-dimensional for all $L$ in the smoothable divisor, then the contribution of
        the smoothable divisor to the jump locus would be higher than allowed,
        see the proof of Theorem~\ref{ref:schreyerfestdinnerimproved:thm}.
    \end{proof}


\subsection*{Acknowledgements}
The work was supported by the Polish National Science Center project number:
2013/10/E/ST1/00688. We thank K.~Ranestad and F.~Russo for helpful comments.
We thank the organizers of the Schreyer 60 conference where this work has
begun. A final revision of this work was done during the Simons Semester
\emph{Varieties: Arithmetic and Transformations} in Warsaw Sep 1-Dec 1, 2018.

\small
\newcommand{\etalchar}[1]{$^{#1}$}
\def\cdprime{$''$}

\normalsize

\vspace*{1cm}
\noindent IMPAN Warsaw, \\ email: jjelisiejew@impan.pl\\
\\
 Jagiellonian University Cracow, Department of Mathematics \\ email: grzegorz.kapustka@uj.edu.pl \\
\\
IMPAN Warsaw, \\ email: michal.kapustka@impan.pl\\
 Jagiellonian University Cracow, Department of Mathematics, michal.kapustka@uj.edu.pl \\
 University of Stavanger, Department of Mathematics and Natural Sciences, \\email: michal.kapustka@uis.no

\end{document}